\tikzset{
	external/system call={
	xelatex \tikzexternalcheckshellescape
	-halt-on-error -interaction=batchmode --shell-escape --enable-write18
	-jobname "\image" "\texsource"}
}
\def\nextAngle{0}
\tikzset{
	next angle/.style={
		in=#1+180,
		out=\nextAngle,
		prefix after command= {\pgfextra{\def\nextAngle{#1}}}
	},
	start angle/.style={
		out=#1,
		nangle=#1,
	},
	nangle/.code={
		\def\nextAngle{#1}
	}
}
\newtheorem{thm}{Theorem}[section]
\newtheorem{lemma}[thm]{Lemma}
\newtheorem{prop}[thm]{Proposition}
\newtheorem{cor}[thm]{Corollary}
\theoremstyle{definition}
\theoremstyle{remark}
\newcommand{\bbh}{\mathbb{H}}
\newcommand{\cald}{\mathcal{D}}
\newcommand{\caln}{\mathcal{N}}
\newcommand{\calp}{\mathcal{P}}
\newcommand{\calx}{\mathcal{X}}
\newcommand{\caly}{\mathcal{Y}}
\newcommand\dd[1]{\ensuremath{%
  \mathop{}\!\mathrm{d}#1\@ifnextchar\dd{\!}{}}}
\DeclareFontFamily{U}{tipa}{}
\DeclareFontShape{U}{tipa}{m}{n}{<->tipa10}{}
\newcommand{\arc@char}{{\usefont{U}{tipa}{m}{n}\symbol{62}}}%
\newcommand{\arc}[1]{\mathpalette\arc@arc{#1}}
\newcommand{\arc@arc}[2]{%
  \sbox0{$\m@th#1#2$}%
  \vbox{
    \hbox{\resizebox{\wd0}{\height}{\arc@char}}
    \nointerlineskip
    \box0
  }%
}
\newcommand{\abs}[1]{\left| #1 \right|}
\newcommand{\gauss}[1]{\left[ #1 \right]}
\newcommand{\set}[1]{\left\{ #1 \right\}}
\newcommand{\bracket}[1]{\left( #1 \right)}
\author{
  LastName1, FirstName1\\
  \texttt{first1.last1@xxxxx.com}
  \and
  LastName2, FirstName2\\
  \texttt{first2.last2@xxxxx.com}
}
\author{
	Wujie Shen\\
	\texttt{1800010683@pku.edu.cn}
	\and 
	Jiajun Wang\\
	\texttt{wjiajun@pku.edu.cn}
}
\date{%
	LMAM, School of Mathematical Sciences, Peking University\\%
	Bejing, 100871, P. R. China
	\\[2ex]%
	\today
}
\title{Minimal length of nonsimple closed geodesics on hyperbolic surfaces}
\begin{document}

\maketitle

\begin{abstract}
In the present paper, we show that the minimal length of closed geodesics on finite-type hyperbolic surfaces with self-intersection number $k$ has order $2\log k$ as $k$ gets large.
\end{abstract}

\section{Introduction}

The length of a simple closed geodesic on a hyperbolic surface can be arbitrarily small and the collar lemma states that it has a collar neighborhood depending only on the length of the geodesic. On the other hand, Hempel showed in \cite{H2797} that a nonsimple closed geodesic has a universal lower bound $2\log(1+\sqrt{2})$ and Yamada showed in \cite{Y2799} that $2 \cosh^{-1}(3)=4\log(1+\sqrt{2})$ is the best possible lower bound and is attained on a pair of pants with ideal punctures. Basmajian showed in \cite{B2794} that a nonsimple closed geodesic has a similar \emph{stable neighborhood}, and the length of a closed geodesic gets arbitrarily large as its self-intersection number gets large (\cite[Corollary 1.2]{B2794}). In \cite{B2795}, Baribaud computed the minimal length of geodesics with given self-intersection number on pairs of pants.

It is interesting to study the growth rate of the minimal length of non-simple closed geodesics with respect to its self-intersection number $k$ (called a \emph{$k$-geodesic} for brevity). Let $\omega$ be a closed geodesic or a geodesic segment on a hyperbolic surface. Denote its length by $\ell(\omega)$ and its self-intersection number by $\abs{\omega\cap\omega}$. $\abs{\omega\cap\omega}$ counts the intersection points of $\omega$ with multiplicity that an intersection point with $n$ preimages contribute $\binom{n}{2}$ to $\abs{\omega\cap\omega}$. (See \S \ref{sec:estimate} for definitions.)

On a fixed hyperbolic surface, Basmajian showed in \cite{B2803} that a $k$-geodesic has length no less than $C\sqrt{k}$, where $C$ is a constant depending only on the hyperbolic surface. Erlandsson and Parlier showed in \cite{EP2802} that the self-intersection number of the shortest geodesic with at least $k$ self-intersections on a hyperbolic surface is bounded by a function depending only on $k$ that grows linearly as $k$ gets large. 

Let $M_k$ be the infimum of lengths of $k$-geodesics among all hyperbolic surfaces. Basmajian showed (\cite[Corollary 1.4]{B2803}) that
\begin{equation}\label{eqn:growth_bas}
\tfrac12\log \frac{k}2\leqslant M_k \leqslant 2\cosh^{-1}(2k+1)\asymp 2\log k
\end{equation}
The notation $f(k)\asymp g(k)$ means that ${f(k)}/{g(k)}$ is bounded from above and below by positive constants. 
Basmajian also showed that $M_k$ is realized by a $k$-geodesic on some hyperbolic surface. 

\begin{thm}\label{thm:main}
Let $M_k$ be the infimum of lengths of $k$-geodesics among all oriented, metrically complete, finite-volume hyperbolic surfaces, then
\begin{equation}\label{eqn:length_asymptotic}
\lim_{k\to\infty} \frac{M_k}{2\log k}=1
\end{equation}
\end{thm}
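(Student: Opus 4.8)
The plan is to establish the two inequalities $\limsup_{k\to\infty} M_k/(2\log k)\le 1$ and $\liminf_{k\to\infty} M_k/(2\log k)\ge 1$ separately; by \eqref{eqn:growth_bas} we already know $M_k\asymp 2\log k$, so only the sharp constant $1$ is at stake. For the upper bound I would construct, for each $k$, an explicit hyperbolic surface carrying a $k$-geodesic of length $(2+o(1))\log k$. The natural candidates are one-holed tori or thrice-punctured/ideal pairs of pants whose waist curves are pinched: on such a surface a geodesic that winds $n$ times around a very short geodesic $\gamma$ of length $\epsilon$ before closing up has self-intersection number growing quadratically in $n$ (roughly $\binom{n}{2}$, exactly the "multiplicity" convention singled out in the introduction), while its length is dominated by the $2n$ near-parallel strands crossing the collar of $\gamma$, giving length $\approx 2\cdot(\text{half-collar width})\approx 2\log(1/\epsilon)$ plus lower-order terms. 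Optimizing — letting $\epsilon\to 0$ slowly as $k\to\infty$ so that $n\asymp\sqrt{k}$ and $\log(1/\epsilon)\approx\tfrac12\log k$ — yields a $k$-geodesic of length $(2+o(1))\cdot\tfrac12\log k\cdot 2 = (2+o(1))\log k$. One must check that the self-intersection count, with the binomial-coefficient multiplicity, is genuinely $\sim k$ and not merely $\Theta(k)$, which pins down the leading constant; Baribaud's computation on pairs of pants should provide the template, and Basmajian's upper bound construction in \cite{B2803} already gives something in the right ballpark, so the work here is to sharpen the constant from whatever \eqref{eqn:growth_bas} gives to exactly $1$.

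The lower bound is the substantive direction and I expect it to be the main obstacle. The idea is: suppose $\omega$ is a $k$-geodesic on some complete finite-volume hyperbolic surface $S$ with $\ell(\omega)$ substantially below $2\log k$, and derive a contradiction. The key mechanism is a local-to-global packing estimate à la Basmajian: around each self-intersection point the two crossing strands of $\omega$, being geodesic, force a definite amount of length to be spent nearby (this is the quantitative content of Hempel's and Basmajian's lower bounds and of the "stable neighborhood"). More precisely, lift to $\bbh^2$: the geodesic $\omega$ determines a large collection of lifts, and $k$ self-intersections translate into $k$ crossing pairs among these lifts within a fundamental domain for $\langle\omega\rangle$; a crossing pair of complete geodesics in $\bbh^2$ whose relevant arcs have length $\le L$ must cross at an angle bounded below in terms of $L$, and controlling how many such mutually-crossing chords of bounded length can coexist gives $k \le$ (something like) $e^{\ell(\omega)/2}$ up to lower-order corrections — equivalently $\ell(\omega)\ge (2-o(1))\log k$. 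I would phrase this as: divide $\omega$ into arcs, bound the number of transverse intersections between two geodesic arcs of lengths $\ell_1,\ell_2$ in terms of how much they can "fold" within a thin region, and sum. The delicate point is getting the constant $2$ rather than some smaller constant: one needs the intersections to be forced into a configuration resembling the extremal winding-around-a-short-geodesic picture from the upper bound, so the two constructions must be shown to match.

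Concretely, I would carry out the lower bound as follows. First, reduce to the case where $\omega$ spends most of its length inside one thin part: if the short geodesics of $S$ crossed by $\omega$ all had length bounded below, the thick-part geometry would force $\ell(\omega)\gtrsim\sqrt{k}\gg\log k$ by Basmajian's $C\sqrt k$ bound, so for near-minimizers $\omega$ must repeatedly traverse a collar of some geodesic $\gamma$ of length $\epsilon\to 0$. Second, in that collar, decompose $\omega$ into strands crossing the collar; if there are $m$ such strands then each has length $\approx 2\log(2/\epsilon)$, so $\ell(\omega)\gtrsim 2m\log(1/\epsilon)$, while the strands pairwise intersect at most a bounded number of times unless they wind, so $k\lesssim m^2/2$ forces $m\gtrsim\sqrt{2k}$ — but this naive count gives $\ell(\omega)\gtrsim 2\sqrt{2k}\log(1/\epsilon)$, which is too strong and hence means $\epsilon$ cannot be too small: balancing, $\log(1/\epsilon)$ must itself be $\approx\tfrac12\log k$ and $m\approx\sqrt{2k}$ is impossible, forcing instead the winding regime where a single pair of strands contributes many intersections. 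Third, analyze the winding regime precisely: a strand winding $n$ times contributes $\sim n^2$ self-intersections and has length $\sim 2\log(1/\epsilon)+n\epsilon$, and optimizing over $n,\epsilon$ under the constraint that total self-intersections equal $k$ recovers the minimum $(2+o(1))\log k$. The technical heart is step three — making "winds $n$ times, contributes $\sim\binom{n}{2}$ intersections, costs $\sim 2\log(1/\epsilon)$ in length" into rigorous two-sided estimates valid on an arbitrary finite-volume surface — and reconciling it with the explicit construction so that the constants agree and the limit \eqref{eqn:length_asymptotic} is exactly $1$.
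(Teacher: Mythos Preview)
Your upper bound is already in hand: $M_k\le 2\cosh^{-1}(2k+1)$ from \eqref{eqn:growth_bas} gives $\limsup M_k/(2\log k)\le 1$ directly (and the paper simply cites Baribaud's Theorem~\ref{thm:baribaud}), so no new construction is needed. More seriously, your winding heuristic is wrong: a geodesic arc that enters a collar, spirals $n$ times around the short core, and exits has $\sim n$ self-intersections, not $\sim\binom{n}{2}$---lift to $\bbh^2$ and observe that the arc is a single half-circle meeting each of $n$ deck-translates exactly once. If you run your step-three optimization with your stated model ($n^2$ intersections, length $2\log(1/\epsilon)+n\epsilon$), you get $\epsilon\sim 2/\sqrt{k}$ and $L\approx\log k$, not $2\log k$, so as written your argument would produce the wrong constant. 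With the correct linear count the optimization does give $2\log k$, but then the heuristic matches the paper's mechanism rather than yours.

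The paper's lower bound takes a different and cleaner route: instead of classifying near-minimizers, it proves the uniform bound $\abs{\Gamma\cap\Gamma}<9L^2e^{L/2}$ for \emph{every} closed geodesic of length $L$ (Theorem~\ref{thm:bound_intersection_by_length}) and inverts. The key device---absent from your outline---is an $L$-dependent thick--thin decomposition: a simple closed geodesic is declared ``short'' when its length is below $e^{-L/4}$. This calibration makes the injectivity radius on the thick part at least $\sim e^{-L/4}$ (Lemma~\ref{lemma:inj_radius_outside_nbhds}), so chopping $\Gamma_1$ into $\sim Le^{L/4}$ subarcs that pairwise meet at most once bounds thick-part intersections by $\sim L^2e^{L/2}$; simultaneously it forces the single thin-part arc $\Gamma_2$ (Propositions~\ref{prop:no_intersection_with_N1}--\ref{prop:intersection_with_N2}) to stay where the waist circumference is $\gtrsim e^{-L/2}$, so each thin self-intersection costs that much length and there are at most $\sim Le^{L/2}$ of them. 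Your case analysis could perhaps be salvaged, but once the $n^2$ is corrected to $n$ and the ``only one thin part, can't go too deep'' steps are made rigorous, you would essentially be reproving these propositions anyway.
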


Theorem \ref{thm:main} can be generalized to general orientable finite-type hyperbolic surfaces, possibly with holes and geodesic boundaries, since they can be doubled to get a surface as in Theorem \ref{thm:main}.

The proof of Theorem \ref{thm:main} benefits from the following upper bound of $M_k$ by Baribaud in \cite{B2795}

\begin{thm}[Baribaud, \cite{B2795}]\label{thm:baribaud}
For any $\varepsilon>0$, $M_k<(2+\varepsilon)\log k$ for $k$ sufficiently large.
\end{thm}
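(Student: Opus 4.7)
The plan is to exhibit, for each sufficiently large $k$, an explicit $k$-geodesic of length less than $(2+\varepsilon)\log k$ on a fixed hyperbolic surface. Fix the ideal pair of pants $P$, the unique complete finite-area hyperbolic metric on a thrice-punctured sphere, and identify $\pi_1(P)$ with the principal congruence subgroup $\Gamma(2) \subset \mathrm{PSL}(2,\bbz)$ via two parabolic generators $a = \bigl(\begin{smallmatrix}1 & 2 \\ 0 & 1\end{smallmatrix}\bigr)$ and $b = \bigl(\begin{smallmatrix}1 & 0 \\ 2 & 1\end{smallmatrix}\bigr)$ around two of the cusps; the third cusp is represented by $(ab)^{-1}$.

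For each $n \geqslant 1$, let $\gamma_n$ denote the closed geodesic on $P$ in the free homotopy class of $w_n := a^n b$. A direct matrix computation gives $w_n = \bigl(\begin{smallmatrix}4n+1 & 2n \\ 2 & 1\end{smallmatrix}\bigr)$, so $w_n$ is hyperbolic with trace $4n+2$ and translation length
\begin{equation*}
\ell(\gamma_n) = 2\cosh^{-1}(2n+1) = 2\log\!\bigl(2n+1+2\sqrt{n(n+1)}\bigr) = 2\log n + O(1).
\end{equation*}
The next step is to bound the self-intersection number $\abs{\gamma_n \cap \gamma_n}$ from above by a linear function of $n$. I would lift $\gamma_n$ to the axis $\tilde{\gamma}_n \subset \bbh$ of $w_n$ and count which $\Gamma(2)$-translates of $\tilde\gamma_n$ cross it transversally within one translation period; since the cyclically reduced length of $w_n$ in the generators is $n+1$, a combinatorial self-intersection formula of Cohen--Lustig/Birman--Series type yields $\abs{\gamma_n \cap \gamma_n} \leqslant Cn$, and in fact one expects equality $\abs{\gamma_n \cap \gamma_n} = n$, so that the $\gamma_n$ realize Basmajian's inequality $M_k \leqslant 2\cosh^{-1}(2k+1)$ at $k=n$.

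Combining these estimates, for each large $k$ in the image of $n \mapsto \abs{\gamma_n \cap \gamma_n}$ we obtain $M_k \leqslant \ell(\gamma_n) = 2\log k + O(1) < (2+\varepsilon)\log k$. To cover every large $k$ rather than only a subsequence, one enlarges the family of test words, for instance to $a^n b^m$ with $m$ small, so that the realized self-intersection counts include every sufficiently large integer. The main obstacle is the combinatorial control of $\abs{\gamma_n \cap \gamma_n}$: the length is immediate from the trace, but the linear upper bound on the self-intersection number requires careful bookkeeping of how the $n$ nearly-horocyclic arcs of $\gamma_n$ winding past the $a$-cusp interact with each other and with the arc coming from $b$, and this combinatorial analysis is the heart of the argument.
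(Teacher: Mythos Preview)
The paper does not supply its own proof of this theorem: it is quoted from Baribaud \cite{B2795} and used as a black box. More to the point, the statement already follows at once from Basmajian's upper bound recorded in \eqref{eqn:growth_bas}: since
\[
2\cosh^{-1}(2k+1)=2\log\bigl(2k+1+2\sqrt{k(k+1)}\bigr)=2\log k+O(1),
\]
we have $M_k\leqslant 2\log k+O(1)<(2+\varepsilon)\log k$ for all sufficiently large $k$. No new construction is required.

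Your proposal is in effect an attempt to re-derive Basmajian's upper bound by exhibiting the family $a^{n}b$ on the ideal pair of pants. The length computation via the trace is correct, but two issues remain. First, a minor one: with your matrices $a=\bigl(\begin{smallmatrix}1&2\\0&1\end{smallmatrix}\bigr)$ and $b=\bigl(\begin{smallmatrix}1&0\\2&1\end{smallmatrix}\bigr)$ the product $ab$ has trace $6$, so $(ab)^{-1}$ is hyperbolic, not a parabolic around the third cusp; the peripheral class is rather that of $ab^{-1}$. This does not affect your trace formula for $a^{n}b$, but the sentence about the third cusp is wrong as written. Second, and more substantively, you correctly identify but do not close the main gap, namely the determination of $\abs{\gamma_n\cap\gamma_n}$. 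For the theorem as stated you would only need this count to grow at most linearly in $n$ and to cover every large integer, yet even that is left as a promissory note (``one expects'', ``requires careful bookkeeping''). Given that the desired inequality is an immediate consequence of \eqref{eqn:growth_bas}, the cleanest fix is simply to invoke that bound rather than to reconstruct it.
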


We study the self-intersection number of a closed geodesic with given length.

\begin{thm}\label{thm:bound_intersection_by_length}
The self-intersection number of a closed geodesic on a hyperbolic surface with length $L$ is less than $9L^2e^{L/2}$.
\end{thm}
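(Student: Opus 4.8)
The plan is to work in the universal cover $\mathbb{H}^2=\widetilde S$, trade self-intersections of $\gamma$ for short ``sub-loops'' of $\gamma$, and bound the number of those by a packing argument; the exponential $e^{L/2}$ will ultimately come from the fact that a hyperbolic disc of radius $\approx L/2$ has area $\asymp e^{L/2}$ (equivalently, from the exponential, rate $1$, growth of the number of closed geodesics of length $\le L/2$), and the essential input that keeps the argument from diverging is that the injectivity radius of $S$ along $\gamma$ cannot be much smaller than $e^{-L/2}$, precisely because $\gamma$ has length $L$.

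\emph{Reduction.} Write $S=\mathbb H^2/\Gamma$, fix a primitive hyperbolic $g\in\Gamma$ whose axis $\tilde\gamma$ projects to $\gamma$, parametrise $\tilde\gamma$ by arclength so that $g\cdot\tilde\gamma(r)=\tilde\gamma(r+L)$, and set $\sigma=\tilde\gamma([0,L))$. A self-intersection of $\gamma$ is recorded by an ordered pair $(s,t)$ with $s\in[0,L)$, $0<t-s\le L/2$, and $\gamma(s)=\gamma(t)$ — keeping the shorter of the two arcs between the two passages; since an intersection point with $n$ preimages contributes $\binom n2$ to $\abs{\gamma\cap\gamma}$ and, for each of its $\binom n2$ pairs of passages, at least one ordering has forward gap $\le L/2$, there are at least $k$ such pairs in total. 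Each pair determines the unique $h\in\Gamma$ with $h\,\tilde\gamma(s)=\tilde\gamma(t)$; one checks $h\notin\langle g\rangle$, so $h\tilde\gamma\neq\tilde\gamma$ is a lift of $\gamma$ meeting $\tilde\gamma$ transversally at $\tilde\gamma(t)$, and since two distinct geodesics of $\mathbb{H}^2$ meet at most once, the map $(s,t)\mapsto h$ is injective. Moreover $d(\tilde\gamma(s),h\,\tilde\gamma(s))=t-s\le L/2$, so $h$ is parabolic or hyperbolic with translation length $\le L/2$, and the segment $[\tilde\gamma(s),\tilde\gamma(t)]\subset\tilde\gamma$ it carries is the lift of a geodesic sub-loop of $\gamma$ of length $\le L/2$. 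It therefore suffices to bound the number of such data, i.e. the number of distinct lifts of $\gamma$ crossing $\sigma$ ``within $L/2$ along $\tilde\gamma$''.

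\emph{Counting.} The admissible pairs $(s,t)$ lie in the region $\{s\in[0,L),\ 0<t-s\le L/2\}$ of area $L^2/2$. If $(s,t),(s',t')$ satisfy $\abs{s-s'}+\abs{t-t'}<2\iota$, where $\iota$ denotes the minimum of the injectivity radius of $S$ over the points of $\gamma$, then $d\bigl(h\,\tilde\gamma(s'),h'\,\tilde\gamma(s')\bigr)<2\iota$, which forces $h=h'$ and hence $(s,t)=(s',t')$; so the admissible pairs are $2\iota$-separated in the $\ell^1$-metric, giving a first bound $k=O(L^2/\iota^2)$. The geometric heart is that $\iota$ is not too small: a point of $\gamma$ with injectivity radius $<\iota$ lies deep inside the collar of a short closed geodesic (or a cusp region), and since $\gamma$ is primitive it must run out of that collar both forwards and backwards from such a point, each excursion costing length $\gtrsim\log(1/2\iota)$ by the collar lemma; hence $L\gtrsim 2\log(1/2\iota)$ and $\iota\gtrsim \tfrac12 e^{-L/2}$. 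To improve the crude $O(L^2e^{L})$ to $O(L^2e^{L/2})$ one would bin the sub-loops by the primitive conjugacy class of the geodesic $c$ such that $\gamma|_{[s,t]}$ is freely homotopic to a power of $c$, bound the contribution of each class polynomially in $L$ (using the collar lemma and the injectivity-radius estimate above — a length-$L$ geodesic enters the collar of a given short geodesic $O(L)$ times), and sum using that the number of conjugacy classes of elements of translation length $\le L/2$ is $\asymp e^{L/2}/L$; the thin-part collars are treated separately and crudely, and all constants are tracked to land at $9L^2e^{L/2}$.

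The main obstacle is exactly this last refinement — lowering the exponent to $L/2$ (and pinning the constant to $9$). The naive two-parameter packing is controlled only by $\iota\gtrsim e^{-L/2}$ and so loses a square, yielding $e^{L}$; the gain must come from exploiting, at each crossing, the further constraint given by the crossing angle together with the exponential divergence of geodesics in $\mathbb{H}^2$ — two sub-loops whose parameter data and crossing angle agree to within $\sim e^{-L/2}$ produce lifts of $\gamma$ that stay parallel for essentially a full period and hence coincide — so that the configuration space one is packing into is effectively one-dimensional of size $\asymp L\,e^{L/2}$ rather than a square of side $\asymp e^{L/2}$. Making this simultaneity of ``angle + parameter'' separation precise, and reconciling it with the possibly tiny injectivity radius near thin collars, is where I expect the real work to lie.
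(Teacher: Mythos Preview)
Your argument correctly reaches $k=O(L^2/\iota^2)=O(L^2 e^L)$ via the $(s,t)$-packing together with the along-$\gamma$ injectivity bound $\iota\gtrsim e^{-L/2}$, but then stops: you yourself flag the reduction of the exponent to $L/2$ as the missing step, and neither of the two routes you sketch is carried out. The bin-and-sum over primitive conjugacy classes appeals to ``$\#\{\text{primitive classes of translation length}\le L/2\}\asymp e^{L/2}/L$'', but this is a prime-geodesic-theorem asymptotic whose constants depend on the surface; there is no universal upper bound of this shape (a genus-$g$ surface can carry $3g-3$ disjoint simple closed geodesics all of length $<1$), so summing over classes cannot by itself yield a surface-independent inequality. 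The angle-plus-parameter separation idea is left as a heuristic; turning it into a proof would require a quantitative divergence estimate for nearby lifts of $\gamma$ that you have not supplied, and it is not clear it gains a full factor of $e^{L/2}$ in the worst (deep-collar) configuration.

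The paper's mechanism is different and entirely elementary. The trick is an \emph{$L$-dependent} thick--thin decomposition with threshold $e^{-L/4}$ (not a fixed Margulis constant, and not your $e^{-L/2}$). In the resulting thick part the injectivity radius is $\gtrsim e^{-L/4}$; cutting $\Gamma_1=\Gamma\cap(\text{thick})$ into $M\approx L\,e^{L/4}$ arcs of length $<e^{-L/4}$, each arc is embedded and any two meet at most once, so $|\Gamma_1\cap\Gamma_1|\le\binom{M}{2}\approx\tfrac12 L^2 e^{L/2}$ --- the exponent $L/2$ arises as $2\times L/4$ by \emph{squaring} a segment count, not from counting geodesics of length $\le L/2$. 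For the thin part one shows that $\Gamma$ meets at most one $N_2(c)$, and does so in a single arc $\Gamma_2$; the self-intersections of $\Gamma_2$ are then nested, consecutive ones bounding a loop around $c$ of length $\gtrsim e^{-L/2}$, whence $|\Gamma_2\cap\Gamma_2|\lesssim L\,e^{L/2}$. Adding the two contributions and tracking constants gives $9L^2e^{L/2}$. Your observation $\iota\gtrsim e^{-L/2}$ is exactly what drives the thin-part estimate, but on its own it is one exponent too weak for the thick part; the idea you are missing is to raise the threshold to $e^{-L/4}$ so that the square lands on $e^{L/2}$.
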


From the proof of Theorem \ref{thm:bound_intersection_by_length} and Lemma \ref{lemma:inj_radius_outside_nbhds} we actually have
$$\varlimsup_{L\to+\infty}\frac{\abs{\Gamma\cap\Gamma}}{L^2e^{L/2}}\leqslant8$$

Theorem \ref{thm:main} follows from Theorem \ref{thm:baribaud} and the following lower bound estimate of $M_k$.

\begin{cor}\label{thm:lower_bound}
For any $\varepsilon>0$, $M_k>(2-\varepsilon)\log{k}$ for $k$ sufficiently large.
\end{cor}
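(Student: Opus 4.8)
The plan is to deduce Corollary \ref{thm:lower_bound} directly from Theorem \ref{thm:bound_intersection_by_length} by a short asymptotic comparison; essentially all of the analytic work has already been packaged into that theorem. Fix $\varepsilon \in (0,1)$. By the definition of $M_k$ as an infimum, for every $k$ there is an oriented, metrically complete, finite-volume hyperbolic surface carrying a closed geodesic $\gamma_k$ with self-intersection number exactly $k$ and length $\ell(\gamma_k) < M_k + 1$ (any fixed positive slack works; Basmajian's realization result would even let one take $\ell(\gamma_k) = M_k$, but this is not needed). Applying Theorem \ref{thm:bound_intersection_by_length} to $\gamma_k$ gives
\[
k = \abs{\gamma_k \cap \gamma_k} < 9\,\ell(\gamma_k)^2 e^{\ell(\gamma_k)/2}.
\]

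Next I would argue by contradiction. Suppose $M_k \leqslant (2-\varepsilon)\log k$ for some arbitrarily large values of $k$. Since the function $L \mapsto L^2 e^{L/2}$ is increasing for $L>0$ (its derivative is $Le^{L/2}(2 + L/2)>0$), and $\ell(\gamma_k) < M_k + 1 \leqslant (2-\varepsilon)\log k + 1$, the displayed inequality yields
\[
k < 9\bigl((2-\varepsilon)\log k + 1\bigr)^2 e^{((2-\varepsilon)\log k + 1)/2} = 9 e^{1/2}\bigl((2-\varepsilon)\log k + 1\bigr)^2 k^{1-\varepsilon/2}.
\]
Dividing through by $k^{1-\varepsilon/2}$ gives $k^{\varepsilon/2} < 9e^{1/2}\bigl((2-\varepsilon)\log k + 1\bigr)^2$, whose right-hand side is $O\bigl((\log k)^2\bigr)$. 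This is impossible once $k$ is large enough, because $k^{\varepsilon/2}$ outgrows every power of $\log k$. Hence $M_k > (2-\varepsilon)\log k$ for all sufficiently large $k$, the threshold depending on $\varepsilon$.

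There is no genuine obstacle in this argument: the only points requiring care are bookkeeping ones, namely selecting a legitimate near-minimizer $\gamma_k$ from the infimum (or citing Basmajian's minimizer), using monotonicity of $L \mapsto L^2 e^{L/2}$ so that the upper bound on $\ell(\gamma_k)$ transfers cleanly to the bound on $k$, and noting that "$k$ sufficiently large" is allowed to depend on $\varepsilon$. Finally, combining Corollary \ref{thm:lower_bound} with Theorem \ref{thm:baribaud} squeezes $M_k/(2\log k)$ between $1-\varepsilon/2$ and $1+\varepsilon/2$ for all large $k$ and every $\varepsilon>0$, which is exactly Theorem \ref{thm:main}.
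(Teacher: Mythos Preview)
Your proof is correct and follows essentially the same route as the paper: both apply Theorem~\ref{thm:bound_intersection_by_length} to a $k$-geodesic of length close to $M_k$, substitute the hypothetical bound $L\lesssim(2-\varepsilon)\log k$, and reduce to the impossible inequality $k^{\varepsilon/2}=O((\log k)^2)$. The only cosmetic differences are that the paper works out an explicit threshold $k_0=\exp(6/\varepsilon_1^2)$ via an elementary calculus computation, whereas you invoke the standard growth comparison directly, and you introduce a $+1$ slack in the near-minimizer (avoiding any appeal to Basmajian's realization of $M_k$) while the paper argues directly with an arbitrary $k$-geodesic of length below $(2-\varepsilon)\log k$.
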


Theorem \ref{thm:bound_intersection_by_length} is obtained by estimating the self-intersection number in certain thin and thick parts of a hyperbolic surface for a possibly nonsimple closed geodesic.

\subsubsection*{Acknowledgement}

The present work is motivated by a seminar talk on ``simple closed curves on surfaces" given by Yi Huang. We would like to thank Professors Shicheng Wang and Yi Liu for helpful discussions. The second author is partly supported by NSFC 12131009 and NKRDPC 2020YFA0712800.

\section{Neighborhoods of sufficiently short geodesics and cusps}\label{sec:nbhds}

Let $L\geqslant4\log(1+\sqrt{2})>3.5$ be a constant. Let $\Sigma$ be an oriented, metrically complete hyperbolic surface of finite type. Topologically $\Sigma$ is an orientable surface of genus $g$ with $n$ punctures such that $2g+n\geqslant 3$. Denote the length of a curve $c$ on $\Sigma$ by $\ell(c)$. 

\subsection{Neighborhoods of short geodesics}

Let $\mathcal{X}$ be the set of closed geodesics $c$ on $\Sigma$ with length
\begin{equation}\label{eqn:short_length}
\ell(c)<e^{-{L}/4}\leqslant e^{-\log(1+\sqrt{2})}=\sqrt{2}-1<0.42. 
\end{equation}
Curves in $\mathcal{X}$ are simple by Hempel's universal lower bounds for nonsimple closed geodesics. The collar lemma (\cite[Lemma 13.6]{FM2012}) states that, for any $c$ in $\mathcal{X}$, $N(c)=\set{p\in\Sigma\,:\,d(p,c)\leqslant w(\ell(c))}$ is an embedded annulus, where $w(x)$ is defined by
$$w(x):=\sinh^{-1}\bracket{\frac{1}{\sinh({x}/{2})}}$$

\begin{lemma}\label{lemma:w_estimate}
For each $x>0$, we have
\begin{equation}
w(x)>\log{\frac{4}{x}}
\end{equation}
\end{lemma}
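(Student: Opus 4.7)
The plan is to reduce the inequality to the elementary fact that $\tanh(u) < u$ for $u > 0$. The first step is to rewrite $w(x)$ in a more tractable closed form using the definition $\sinh^{-1}(z) = \log(z + \sqrt{z^2+1})$. Setting $z = 1/\sinh(x/2)$ gives
\[
w(x) = \log\!\left(\frac{1 + \cosh(x/2)}{\sinh(x/2)}\right),
\]
where I used $\sqrt{1 + 1/\sinh^2(x/2)} = \cosh(x/2)/\sinh(x/2)$ (valid for $x > 0$).

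Next I would apply the half-angle identities $1 + \cosh(x/2) = 2\cosh^2(x/4)$ and $\sinh(x/2) = 2\sinh(x/4)\cosh(x/4)$ to collapse the expression to
\[
w(x) = \log\coth(x/4).
\]
With this form, the desired inequality $w(x) > \log(4/x)$ becomes $\coth(x/4) > 4/x$, equivalently $x > 4\tanh(x/4)$. Substituting $u = x/4$, this is $u > \tanh(u)$ for $u > 0$, which is the classical inequality (immediate from $\tanh'(0)=1$ and $\tanh'' < 0$ on $(0,\infty)$, or from comparing Taylor series).

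There is no real obstacle here; the only thing to be a little careful about is the sign in simplifying the square root (we are using $\sinh(x/2) > 0$ for $x > 0$, so $\sqrt{\sinh^2(x/2)} = \sinh(x/2)$). The rewriting $w(x) = \log\coth(x/4)$ is the only slightly non-obvious step, and it is forced by the appearance of the half-angle formulas.
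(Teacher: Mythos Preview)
Your proof is correct and follows essentially the same route as the paper: both arguments simplify $e^{w(x)}$ to $\coth(x/4)$ (the paper by solving $\sinh(w)\sinh(x/2)=1$ for $e^{w}$, you via the closed form of $\sinh^{-1}$ and half-angle identities) and then invoke the elementary inequality $\coth(t)>1/t$, i.e.\ $\tanh(u)<u$.
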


\begin{proof}
Since $\sinh(w(x))\sinh({x}/2)=1$,
$$\frac{e^{w(x)}-e^{-w(x)}}{2}\cdot\frac{e^{x/2}-e^{-x/2}}{2}=1,$$
$$e^{w(x)}=\frac{2}{e^{x/2}-e^{-x/2}}+\sqrt{\bracket{\frac{2}{e^{x/2}-e^{-x/2}}}^2+1}=\frac{2+e^{x/2}+e^{-x/2}}{e^{x/2}-e^{-x/2}}=\frac{e^{x/4}+e^{-x/4}}{e^{x/4}-e^{-x/4}}>\frac{4}{x},$$
	The last inequality uses the fact that $\displaystyle{\frac{e^t+e^{-t}}{e^t-e^{-t}}>\frac{1}{t}}$ for $t>0$.
\end{proof}

It follows from Lemma \ref{lemma:w_estimate} and \eqref{eqn:short_length} that for any $c\in\mathcal{X}$, 
\begin{equation}\label{eqn:collar_length}
w(\ell(c))>\log\bracket{{4}/{\ell(c)}}>\log(4(\sqrt{2}+1))>2.26.
\end{equation}

\begin{lemma}
Geodesics in $\mathcal{X}$ are disjoint. 
\end{lemma}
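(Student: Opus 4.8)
The plan is a contradiction argument combining the collar lemma with the crude lower bound \eqref{eqn:collar_length}. Suppose $c_1,c_2\in\mathcal{X}$ are distinct geodesics that intersect at a point $p$; since every curve in $\mathcal{X}$ is simple, $p$ lies on the core $c_2$ of the embedded annular collar $N(c_2)$ of width $w(\ell(c_2))$.

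First I would dispose of the case $c_1\subset N(c_2)$: a closed geodesic lying in the embedded annulus $N(c_2)$ is freely homotopic to a nonzero power of $c_2$, and being simple it must be homotopic to $c_2^{\pm1}$, hence coincide with $c_2$ as a set, contradicting $c_1\neq c_2$. In the remaining case $c_1$ leaves $N(c_2)$, so traversing $c_1$ once starting from $p$ it exits the collar a first time in the forward direction and a first time in the backward direction; the corresponding two subarcs of $c_1$ are disjoint, each runs from $p\in c_2$ to a point of $\partial N(c_2)$, and therefore each has length at least $d(c_2,\partial N(c_2))=w(\ell(c_2))$. Summing the two contributions gives $\ell(c_1)\geqslant 2w(\ell(c_2))$.

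Now \eqref{eqn:collar_length} gives $w(\ell(c_2))>2.26$, so $\ell(c_1)>4.5$, which is incompatible with $\ell(c_1)<\sqrt2-1<0.42$ coming from the definition \eqref{eqn:short_length} of $\mathcal{X}$. This contradiction shows that distinct geodesics in $\mathcal{X}$ are disjoint. The one step that needs careful phrasing is the ``trapping'' argument — that a complete geodesic through the core of an embedded collar which never leaves it must be the core itself; this is exactly where embeddedness of $N(c_2)$ (so that $\pi_1 N(c_2)$ is generated by $c_2$) is used, and everything after that is just the triangle inequality together with the elementary estimate already recorded in \eqref{eqn:collar_length}.
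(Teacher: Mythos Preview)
Your proof is correct and follows essentially the same strategy as the paper: use the numerical gap between $\ell(c)<0.42$ and the collar width $w(\ell(c))>2.26$ to force one short geodesic into the collar of the other, then invoke uniqueness of the simple closed geodesic in an embedded annulus. The paper argues the containment in the opposite direction --- showing $c_2\subset N(c_1)$ directly from the diameter bound on $c_2$ rather than ruling out $c_1\not\subset N(c_2)$ via a crossing-length estimate --- which avoids your case split, but the content is the same.
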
 

\begin{proof} Suppose that two simple closed geodesics $c_1,c_2\in\mathcal{X}$ intersects. Since $\ell(c_2)<0.42$ by \eqref{eqn:short_length} and $w(\ell(c_1))>2.26$ by \eqref{eqn:collar_length}, we have $c_2\subseteq N(c_1)$. $N(c_1)$ is a hyperbolic annulus and the only simple closed geodesic in $N(c_1)$ is $c_1$ itself. It follows that $c_1=c_2$.
\end{proof}

Suppose that $\mathcal{X}=\set{c_1,\cdots,c_m}$ ($\mathcal{X}$ could be empty). For each $1\leqslant i\leqslant m$, let
\begin{equation}\begin{aligned}
N_3(c_i)=&\set{x\in{\Sigma}:d(x,c_i)<{\log{\frac{1}{\ell(c_i)}}}},\\
N_2(c_i)=&\set{x\in{\Sigma}:d(x,c_i)<{\log{\frac{1}{\ell(c_i)}}-\frac{L}{4}}},\\
N_1(c_i)=&\set{x\in{\Sigma}:d(x,c_i)<{\log{\frac{1}{\ell(c_i)}}-\frac{L}{2}}},\\
\end{aligned}\end{equation}
Each $N_j(c_i)$ is either empty or an annulus, as illustrated in Figure \ref{fig:nbhd_short_geodesics}.
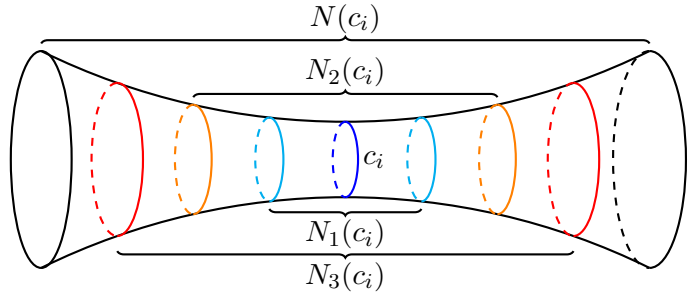
\begin{figure}[htbp]
\begin{center}
\tikzexternaldisable
\begin{tikzpicture}[declare function={
	R = 9;
	f(\x) = R+.5-sqrt(R*R-\x*\x);
	ratio = 3;
}]
\draw [thick] (-4,{f(4)}) arc ({270-asin(4/R)}:{270+asin(4/R)}:{R} and {R});
\draw [thick] (-4,{-f(4)}) arc ({90+asin(4/R)}:{90-asin(4/R)}:{R} and {R});

\draw [thick, color=black] (-4,0) ellipse ({0.4} and {f(4)});

\foreach \a/\b in {-3/red, -2/orange, -1/cyan, 0/blue, 1/cyan, 2/orange, 3/red} {
	\draw [color=\b, thick] (\a,{-f(\a)}) arc (-90:90:{f(\a)/ratio} and {f(\a)});
	\draw [dashed, color=\b, thick] (\a,{f(\a)}) arc (90:270:{f(\a)/ratio} and {f(\a)});
}
\draw [color=black, thick] (4,{-f(4)}) arc (-90:90:{f(4)/ratio} and {f(4)});
\draw [color=black, thick, dashed] (4,{f(4)}) arc (90:270:{f(4)/ratio} and {f(4)});

\draw [thick, decoration={
	brace, mirror, raise=0.1cm
	}, decorate
] (-1, {-f(1)}) -- (1,{-f(1)}) 
	node [pos=0.5,anchor=north,yshift=-0.1cm] {$N_1(c_i)$}; 

\draw [thick, decoration={
	brace, mirror, raise=0.2cm
	}, decorate
] (-3, {-f(3)}) -- (3,{-f(3)}) 
	node [pos=0.5,anchor=north,yshift=-0.2cm] {$N_3(c_i)$}; 

\draw [thick, decoration={
	brace, raise=0.1cm
	}, decorate
] (-2,{f(2)}) -- (2,{f(2)}) 
	node [pos=0.5,anchor=south,yshift=0.1cm] {$N_2(c_i)$}; 

\draw [thick, decoration={
	brace, raise=0.1cm
	}, decorate
] (-4,{f(4)}) -- (4,{f(4)}) 
	node [pos=0.5,anchor=south,yshift=0.1cm] {$N(c_i)$}; 
  			
\path ({f(0)/ratio},0) node[circle, inner sep=1pt, label={[label distance=.4em, anchor=center]0:$c_i$}]{};
  			
\end{tikzpicture}
\end{center}
\caption{\label{fig:nbhd_short_geodesics}
Neighborhoods of short geodesics.}
\end{figure}

\subsection{Neighborhoods of cusps}

When $\Sigma$ has punctures, we consider the universal covering $p:\bbh^2\to\Sigma$, where $\bbh^2$ is the hyperbolic plane. Each puncture has a neighborhood whose boundary lifts to a union of horocycles that can intersect in at most points of tangency. Such a neighborhood is called a \emph{cusp} of the surface.

In the upper half-plane model for $\bbh^2$. Let $\Gamma$ be a cyclic group generated by a parabolic isometry of $\bbh^2$ fixing the point $\infty$. Let $H_c=\set{(x,y)\in\bbh^2\,\big|\,y\geqslant c}$ be a horoball. Each cusp can be modelled as $H_c/\Gamma$ for some $c$ up to isometry, and is diffeomorphic to $S^1\times[c,\infty)$ so that each circle $S^1\times\set{t}$ with $t\geqslant c$ is the image of a horocycle under $p$. Each circle is also called a \emph{horocycle} by abuse of notation. The circle $S^1\times\set{t}$ with $t\geqslant c$ is called an \emph{Euclidean circle}. A cusp is \emph{maximal} if it lifts to a union of horocycles with disjoint interiors such that there exists at least one point of tangency between different horocycles.

\begin{lemma}[Adams, \cite{A2817}]\label{lemma:cusp_maximal_area}
For an orientable, metrically complete hyperbolic surface, the area with a maximal cusp is at least $4$. The lower bound $4$ is realized only in an ideal pair of pants.
\end{lemma}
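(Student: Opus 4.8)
The plan is to pass to the universal cover $\bbh^2$ and convert the statement into an inequality on the matrix entries of the holonomy group $\Gamma\subset\mathrm{PSL}_2(\mathbb R)$, which is discrete and torsion-free since $\Sigma$ is an orientable hyperbolic \emph{manifold}. First, conjugate so that the given cusp sits at $\infty$ with stabilizer $\Gamma_\infty=\langle z\mapsto z+1\rangle$; then any $g=\bigl(\begin{smallmatrix}a&b\\c&d\end{smallmatrix}\bigr)\in\Gamma\setminus\Gamma_\infty$ has $c\neq0$, and a direct computation shows that $g$ sends the horoball $B=\set{y\geqslant h}$ to the Euclidean disc tangent to $\mathbb R$ at $a/c$ of diameter $1/(c^2h)$. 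Consequently the image of $B$ in $\Sigma$ is embedded (with tangencies allowed) exactly when $1/(c^2h)\leqslant h$, i.e.\ $|c|\geqslant 1/h$, for every such $g$. Since the cusp is maximal there is a tangent horoball not coming from $\Gamma_\infty$, so $\Gamma\supsetneq\Gamma_\infty$ and $c_0:=\min\set{|c(g)|:g\in\Gamma,\ c(g)\neq0}$ is a well-defined positive number by discreteness. The largest embedded cusp is thus the one with $h=1/c_0$, and as the image of $B$ has area $\int_0^1\!\!\int_h^\infty y^{-2}\,dy\,dx=1/h$, the maximal cusp has area exactly $c_0$.

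It remains to prove $c_0\geqslant4$. Fix $g\in\Gamma$ with $c:=c(g)\neq0$; then $\gamma_\infty^{\,n}g=\bigl(\begin{smallmatrix}a+nc&b+nd\\c&d\end{smallmatrix}\bigr)$, so as $n$ ranges over $\mathbb Z$ the traces $\operatorname{tr}(g)+nc$ form an arithmetic progression of step $c$. The key elementary point is that a bi-infinite arithmetic progression whose step has absolute value $<4$ must meet the open interval $(-2,2)$: if $\tau$ is its unique member in $[-2,\,-2+|c|)$ then, since $|c|<4$, either $\tau$ or $\tau+|c|$ lies in $(-2,2)$. Hence if $|c|<4$ some $\gamma_\infty^{\,n}g$ is an elliptic element of the discrete group $\Gamma$, therefore of finite order, contradicting torsion-freeness. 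So $|c|\geqslant4$ for all such $g$, whence $c_0\geqslant4$ and the maximal cusp has area at least $4$.

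For the equality case, I would suppose $c_0=4$ and let $g\in\Gamma$ realize it. Avoiding $(-2,2)$ now forces $\operatorname{tr}(g)\equiv2\pmod 4$, so after replacing $g$ by a suitable $\gamma_\infty^{\,n}g$ we may assume $\operatorname{tr}(g)=\pm2$; since $c(g)=4\neq0$ this means $g$ is parabolic, and a quick trace computation shows that then exactly one of $\gamma_\infty g$, $\gamma_\infty^{-1}g$ is again parabolic. Thus $\langle\gamma_\infty,g\rangle$ is a discrete, non-elementary group generated by two parabolics with parabolic product, hence conjugate to the principal congruence subgroup $\Gamma(2)$ (the holonomy of the thrice-punctured sphere) by the rigidity of the ideal triangle group. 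As $\Gamma(2)$ is a lattice, the discrete overgroup $\Gamma$ is a lattice containing it with finite index; since any torsion-free hyperbolic surface has area $\geqslant 2\pi=\mathrm{area}(\bbh^2/\Gamma(2))$, this index is $1$. Therefore $\Gamma$ is conjugate to $\Gamma(2)$ and $\Sigma$ is the ideal pair of pants.

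The step I expect to be the main obstacle is the equality case: it rests on the classical but non-trivial rigidity statement that a discrete, non-elementary group generated by two parabolics with parabolic product is conjugate to $\Gamma(2)$, together with the covolume bookkeeping promoting ``$\Gamma$ contains a conjugate of $\Gamma(2)$'' to ``$\Gamma$ is a conjugate of $\Gamma(2)$''. By contrast, the inequality $c_0\geqslant4$ itself is soft once the maximal cusp area has been identified with $c_0$.
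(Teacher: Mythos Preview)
The paper does not prove this lemma; it is simply quoted from Adams~\cite{A2817}, so there is no in-paper proof to compare against. Your argument is a correct self-contained proof. The identification of the maximal cusp area with $c_0=\min\{|c(g)|:g\in\Gamma,\ c(g)\neq0\}$ is right, and the arithmetic-progression trick---forcing an elliptic element into a torsion-free discrete group whenever $|c|<4$---is a clean torsion-free sharpening of Shimizu's lemma. One small point worth noting explicitly: you use that $c_0$ is attained, but this follows from the hypothesis that the cusp is \emph{maximal} (a tangency exists by definition), so there is no gap there.

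Your own diagnosis of the equality case is accurate but slightly pessimistic: the ``rigidity of the ideal triangle group'' that you invoke as a black box can in fact be carried out by the same normalization you already set up. After replacing $g$ by $\gamma_\infty^{\,n}g$ to make it parabolic and conjugating by a real translation (which commutes with $\gamma_\infty$) to put its fixed point at $0$, one has $g=\pm\bigl(\begin{smallmatrix}1&0\\ \pm4&1\end{smallmatrix}\bigr)$, and conjugating by $z\mapsto 2z$ exhibits $\langle\gamma_\infty,g\rangle$ explicitly as $\Gamma(2)$. The final covolume step is also fine: any discrete overgroup of a lattice is itself a lattice, and the Gauss--Bonnet lower bound $2\pi$ for torsion-free finite-area quotients forces the index to be~$1$.
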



Let $\caly$ be the set of punctures of $\Sigma$. Each puncture $c_i$ of $\Sigma$ has a maximal cusp whose boundary Euclidean circle $c$ has 
$\ell(c)\geqslant4$. The cusp of area $4$ that can be lifted to $\bbh^2$, as in Figure \ref{fig:nbhd_cusps},
\begin{figure}[htbp]
\begin{center}
\tikzexternaldisable
\begin{tikzpicture}
\def\x{1.5}

\draw[thick] (-1.5*\x,0) to (-1.5*\x,6*\x);
\draw[thick] (1.5*\x,0) to (1.5*\x,6*\x);

\fill[cyan!20] (-1.5*\x,4*\x) rectangle (1.5*\x,6*\x);

\draw[thick] (-1.5*\x, \x) to (1.5*\x,\x);

\draw[thick, color=red] (-1.5*\x, 2*\x) to (1.5*\x,2*\x);
\draw[thick, color=orange] (-1.5*\x, 3*\x) to (1.5*\x,3*\x);
\draw[thick, color=cyan] (-1.5*\x, 4*\x) to (1.5*\x,4*\x);

\draw[dashed] (-3*\x,0) to (3*\x,0);

\path (-1.5*\x,\x) node[circle, fill, inner sep=1.5pt, label=left:{$P$}]{};
\path (1.5*\x,\x)  node[circle, fill, inner sep=1.5pt, label=right:{$Q$}]{};

\path (-1.5*\x,2*\x) node[circle, fill, inner sep=1.5pt, label=left:{$A_3$}]{};
\path (1.5*\x,2*\x)  node[circle, fill, inner sep=1.5pt, label=right:{$B_3$}]{};

\path (-1.5*\x,3*\x) node[circle, fill, inner sep=1.5pt, label=left:{$A_2$}]{};
\path (1.5*\x,3*\x)  node[circle, fill, inner sep=1.5pt, label=right:{$B_2$}]{};

\path (-1.5*\x,4*\x) node[circle, fill, inner sep=1.5pt, label=left:{$A_1$}]{};
\path (1.5*\x,4*\x)  node[circle, fill, inner sep=1.5pt, label=right:{$B_1$}]{};

\draw [thick, decoration={
	brace, raise=0.1cm
	}, decorate
] (-2*\x, 2*\x) -- (-2*\x, 6*\x) 
	node [pos=0.5,anchor=east,xshift=-0.1cm] {$N_3(c_i)$}; 

\draw [thick, decoration={
	brace, raise=0.1cm
	}, decorate
] (2*\x, 6*\x) -- (2*\x, 3*\x)
	node [pos=0.5,anchor=west,xshift=0.1cm] {$N_2(c_i)$}; 

\draw (0,5*\x) node {$N_1(c_i)$};

\draw (0,6*\x) node[above] {$\infty$};

	  			
\end{tikzpicture}
\end{center}
\tikzexternalenable
\caption{\label{fig:nbhd_cusps}
Neighborhoods of cusps.}
\end{figure}
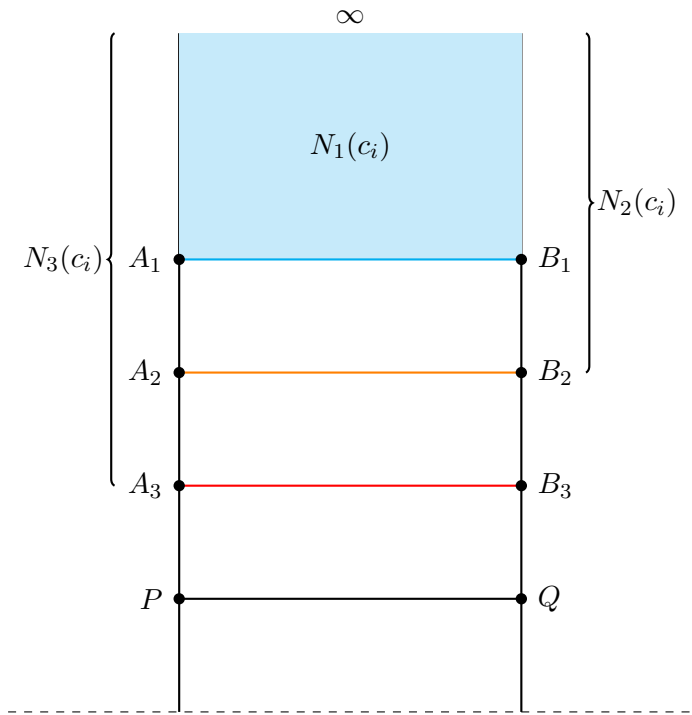
such that $p$ maps the the triangle $\infty PQ$ to the cusp of area $4$ at $c_i$, and $p$ maps the interior of the triangle homeomorphically. Let $N(c_i)$ be the interior of the cusp of area 4. Choose points $A_1, A_2, A_3$ on the ray from $P$ to $\infty$, and $B_1, B_2, B_3$ on the ray from $Q$ to $\infty$ so that
$$d(P,A_3)=d(Q,B_3)=2\log2,\qquad d(A_3,A_2)=d(A_2,A_1)=d(B_3,B_2)=d(B_2,B_1)=\frac{L}4$$
For $j=1,2,3$, let $N_j(c_i)$ be the interior of the image of the triangle $\infty A_jB_j$ under $p$. In other words, the boundary $\partial N_3(c_i)$, $\partial N_2(c_i)$ and $\partial N_1(c_i)$ are Euclidean circles of length 1, $e^{-\frac{L}4}$,  and $e^{-\frac{L}2}$ respectively.

\subsection{A thick-thin decomposition}

Let $\caln:=\bigcup_{c\in\calx\cup\caly}N_2(c)$. $\caln$ is called the \emph{thin part} of $\Sigma$ while $\Sigma\setminus\caln$ is the \emph{thick part} of $\Sigma$. Note that our thick-thin decomposition is different from the regular thick-thin decomposition and it depends on the constant $L$. We will estimate the injectivity radius at a point in the thick part.

\begin{lemma}\label{lemma:ends_annulus_length}
Let $A$ be an annulus in $\Sigma$ with boundary circles $\gamma_1$ and $\gamma_2$, where $\gamma_1$ is a geodesic and $\gamma_2$ is piecewise smooth. If there exists $x\in \gamma_2$ such that $d(\gamma_1, x)=d>0$, then
\begin{equation}
\sinh\bracket{\frac{\ell(\gamma_2)}2}>\frac14 e^d\ell(\gamma_1)
\end{equation}
If we further have $\ell(\gamma_1)<e^{-d}$, then
\begin{equation}
\ell(\gamma_2)>\frac{12}{25}e^{d}\ell(\gamma_1).
\end{equation}
\end{lemma}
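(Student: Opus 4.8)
The plan is to lift the configuration to the upper half-plane and reduce the statement to the standard collar computation together with two elementary one-variable estimates. Being the boundary circles of an embedded annulus $A$, the curves $\gamma_1$ and $\gamma_2$ are simple and disjoint in $\Sigma$, hence represent primitive and mutually freely homotopic classes; let $g\in\pi_1(\Sigma)$ be the primitive hyperbolic element represented by $\gamma_1$. By standard covering-space considerations, the component $\tilde A$ of $p^{-1}(A)$ stabilized by $\langle g\rangle$ is a region in $\bbh^2$ bounded by the axis $\tilde\gamma_1$ of $g$ and a $\langle g\rangle$-invariant lift $\tilde\gamma_2$ of $\gamma_2$, with $\tilde\gamma_2$ disjoint from $\tilde\gamma_1$. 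Normalize coordinates so that $\tilde\gamma_1=\{\operatorname{Re}z=0\}$ and $g(z)=\lambda z$ with $\lambda=e^{\ell(\gamma_1)}$. Fixing a lift $\tilde x\in\tilde\gamma_2$ of $x$, the subarc of $\tilde\gamma_2$ from $\tilde x$ to $g\tilde x$ is a fundamental domain for $\langle g\rangle$ and projects to $\gamma_2$ preserving length, so $\ell(\gamma_2)\ge d_{\bbh^2}(\tilde x,g\tilde x)=:D$.

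Next I would compute $D$ and $d':=d_{\bbh^2}(\tilde x,\tilde\gamma_1)$ explicitly. Writing $\tilde x=re^{i\theta}$ with $\theta\in(0,\pi)$, the foot of the perpendicular from $\tilde x$ to $\tilde\gamma_1$ is $ir$, giving $\cosh d'=1/\sin\theta$; and since $\tilde\gamma_1$ is merely one of the lifts of $\gamma_1$, necessarily $d'\ge d(\gamma_1,x)=d$. Using the distance formula $\cosh d_{\bbh^2}(z,w)=1+|z-w|^2/(2\operatorname{Im}z\operatorname{Im}w)$ together with $(\lambda-1)^2/\lambda=4\sinh^2(\ell(\gamma_1)/2)$, one gets
\[
\cosh D \;=\; 1+\frac{2\sinh^2(\ell(\gamma_1)/2)}{\sin^2\theta}\;=\;1+2\sinh^2\!\bigl(\tfrac{\ell(\gamma_1)}{2}\bigr)\cosh^2 d'.
\]
Applying $\cosh t-1=2\sinh^2(t/2)$ to the left side and taking square roots (everything is positive) turns this into $\sinh(D/2)=\sinh(\ell(\gamma_1)/2)\cosh d'$. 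Combining with $\ell(\gamma_2)\ge D$ and $d'\ge d$ yields the clean intermediate estimate
\[
\sinh\!\Bigl(\frac{\ell(\gamma_2)}{2}\Bigr)\;\ge\;\sinh\!\Bigl(\frac{\ell(\gamma_1)}{2}\Bigr)\cosh d .
\]

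The two assertions then follow by hand. For the first, $\sinh(\ell(\gamma_1)/2)>\ell(\gamma_1)/2$ and $\cosh d>\tfrac12 e^d$ give $\sinh(\ell(\gamma_2)/2)>\tfrac14 e^d\ell(\gamma_1)$. For the second, put $u:=\tfrac14 e^d\ell(\gamma_1)$; the hypothesis $\ell(\gamma_1)<e^{-d}$ forces $u<\tfrac14$, so from the first assertion $\sinh(\ell(\gamma_2)/2)>u$, i.e. $\ell(\gamma_2)>2\sinh^{-1}(u)$. For $0<u<\tfrac14$ one has $\sinh^{-1}(u)=\int_0^u(1+t^2)^{-1/2}\,dt>\tfrac{4}{\sqrt{17}}\,u>\tfrac{24}{25}u$ (the last step since $100^2>17\cdot 24^2$), whence $\ell(\gamma_2)>\tfrac{48}{25}u=\tfrac{12}{25}e^d\ell(\gamma_1)$.

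The only delicate point is the topological bookkeeping in the first paragraph: one must check that there is a $\langle g\rangle$-invariant lift $\tilde\gamma_2$ of $\gamma_2$ disjoint from $\tilde\gamma_1$, that $\tilde\gamma_1$ is the axis of $g$, and that a fundamental arc of $\tilde\gamma_2$ has length exactly $\ell(\gamma_2)$ — this is what reduces the problem to the two-point distance $D=d_{\bbh^2}(\tilde x,g\tilde x)$. The accompanying observation $d'\ge d$, automatic because $\tilde\gamma_1$ is only one of the lifts of $\gamma_1$, is what lets one replace the honest (possibly larger) distance $d'$ by the given $d$. Everything after that is the collar-lemma identity and the elementary estimates above.
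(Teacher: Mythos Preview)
Your argument is correct and is essentially the paper's own proof, just carried out in the upper half-plane rather than the Poincar\'e disk: the paper lifts $A$ to a strip bounded by a lift $\tilde\gamma_1$ and the adjacent lifts $\tilde x_1,\tilde x_2$ of $x$, drops perpendiculars to form a Saccheri quadrilateral, and invokes the Lambert-quadrilateral identity $\sinh(\ell(\delta_2)/2)=\sinh(\ell(\gamma_1)/2)\cosh(\ell(\tilde\eta_1))$, which is exactly your formula $\sinh(D/2)=\sinh(\ell(\gamma_1)/2)\cosh d'$ obtained by direct coordinate computation. Your elementary estimates $\sinh t>t$, $\cosh t>\tfrac12 e^t$, and $\sinh^{-1}u>\tfrac{24}{25}u$ on $(0,\tfrac14)$ also match the paper's. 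One small point where you are actually more careful than the paper: you correctly note only $d'\ge d$ (since $\tilde\gamma_1$ is merely one lift of $\gamma_1$), whereas the paper writes $\ell(\tilde\eta_1)=d$; the inequality is what is needed, and your version is the precise statement.
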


\begin{proof}
The universal covering $p:\bbh^2\to\Sigma$ from the Poincar\'e disk $\bbh^2$ to $\Sigma$ is locally isometric. Let $\widetilde{\gamma}_1$ be a lift of $\gamma_1$. The connected component $\widetilde{A}$ of $p^{-1}(A)$ containing $\widetilde{\gamma}_1$ is a universal cover of the annulus $A$. Let $\widetilde{x}_1$ and $\widetilde{x}_2$ be two adjacent lifts of $x$ in $\widetilde{A}$ so that the boundary of $\widetilde{A}$ betweeen $\widetilde{x}_1$ and $\widetilde{x}_2$ is a lift of $\gamma_2\setminus{x}$. Let $\widetilde{\eta}_1$ and $\widetilde{\eta}_2$ be the shortest geodesics from $\widetilde{x}_1$ and $\widetilde{x}_2$ to $\widetilde{\gamma}_1$ respectively. Then $\eta:=p\circ\widetilde{\eta}_1=p\circ\widetilde{\eta}_2$ is a geodesic connecting $x$ and $\gamma_1$ and $\widetilde{\eta}_1$ and $\widetilde{\eta}_2$ are both perpendicular to $\gamma_1$. Let $\widetilde{y}_1$ and $\widetilde{y}_2$ be the two feet. Without loss of generality, we may assume $\widetilde{\gamma}_1$ is the horizontal diameter of $\bbh^2$ and the origin $O$ is the middle point of $\widetilde{y}_1$ and $\widetilde{y}_2$, as illustrated in Figure \ref{fig:boundary_length_annulus}.
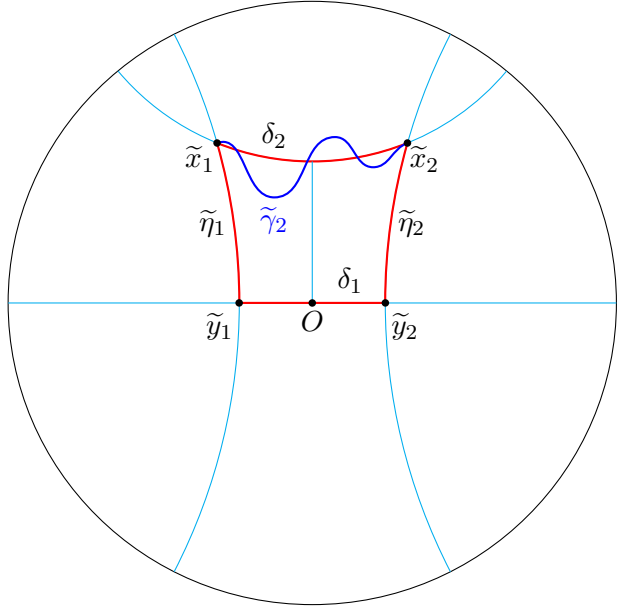
\begin{figure}[htbp]
\centering
\tikzexternaldisable
\begin{tikzpicture}
\def\x{4}
\coordinate (O) at (0,0);
\coordinate (P) at (-2.2*\x,0);
\coordinate (Q) at (2.2*\x,0);
\coordinate (K) at (0, 1.3*\x);

\begin{pgfinterruptboundingbox}
\path[name path=circle_O] (O) circle (\x);
\path[name path=PQ] (P) to (Q);
\coordinate[name intersections={of=circle_O and PQ, by={N,M}}];

\path[name path=circle_P] let
  \p1 = ($ (M) - (P) $),  \n1 = {veclen(\x1,\y1)},
  \p2 = ($ (N) - (P) $),  \n2 = {veclen(\x2,\y2)} in
  (P) circle ({sqrt(\n1)*sqrt(\n2)});
\coordinate[name intersections={of=circle_P and PQ, by={A}}];
\coordinate[name intersections={of=circle_P and circle_O, by={G,S}}];

\path[name path=circle_Q] let
  \p1 = ($ (M) - (Q) $),  \n1 = {veclen(\x1,\y1)},
  \p2 = ($ (N) - (Q) $),  \n2 = {veclen(\x2,\y2)} in
  (Q) circle ({sqrt(\n1)*sqrt(\n2)});
\coordinate[name intersections={of=circle_Q and PQ, by={B}}];
\coordinate[name intersections={of=circle_Q and circle_O, by={H,T}}];

\path[name path=circle_K] (K) circle ({sqrt(0.3*2.3)*\x});
\coordinate[name intersections={of=circle_P and circle_K, by={X,C}}];
\coordinate[name intersections={of=circle_Q and circle_K, by={X,D}}];
\coordinate[name intersections={of=circle_O and circle_K, by={E,F}}];

\end{pgfinterruptboundingbox}

\draw (O) circle (\x);
\draw[color=cyan, thin] (180:\x) to (0:\x);

\pgfmathanglebetweenpoints{\pgfpointanchor{P}{center}}{\pgfpointanchor{G}{center}}
\pgfmathsetmacro{\anglePG}{\pgfmathresult}
\draw[name path=arc_GS, color=cyan, thin]
  let \p1 = ($ (G) - (P) $),  \n1 = {veclen(\x1,\y1)} in
  (G) arc (\anglePG:-\anglePG:\n1);
\draw[name path=HT, color=cyan, thin]
  let \p1 = ($ (G) - (P) $),  \n1 = {veclen(\x1,\y1)} in
  (H) arc ({180-\anglePG}:{180+\anglePG}:\n1);
 
\pgfmathanglebetweenpoints{\pgfpointanchor{K}{center}}{\pgfpointanchor{E}{center}}
\pgfmathsetmacro{\angleKE}{\pgfmathresult}
\draw[name path=arc_EF, color=cyan, thin]
  (E) arc (\angleKE:540-\angleKE:{sqrt(0.3*2.3)*\x});

\pgfmathanglebetweenpoints{\pgfpointanchor{K}{center}}{\pgfpointanchor{C}{center}}
\pgfmathsetmacro{\angleKC}{\pgfmathresult}
\draw[name path=arc_CD, color=red, thick]
  (C) arc (\angleKC:540-\angleKC:{sqrt(0.3*2.3)*\x});

\pgfmathanglebetweenpoints{\pgfpointanchor{P}{center}}{\pgfpointanchor{C}{center}}
\pgfmathsetmacro{\anglePC}{\pgfmathresult}
\draw[name path=arc_AC, color=red, thick]
  let \p1 = ($ (C) - (P) $),  \n1 = {veclen(\x1,\y1)} in
  (C) arc (\anglePC:0:\n1);
\draw[name path=arc_BD, color=red, thick]
  let \p1 = ($ (G) - (P) $),  \n1 = {veclen(\x1,\y1)} in
  (D) arc ({180-\anglePC}:180:\n1);

\draw[color=red, thick] (A) to (B);
\draw[color=cyan] (O) to (0,{(1.3-sqrt(0.3*2.3))*\x});

\draw[smooth, color=blue, thick] (C)
  to [start angle=20, next angle=0] (-0.5,1.4)
	to [next angle=0] (0.3,2.2)
	to [next angle=0] (0.8,1.8)
	to [next angle=20] (D);

\node[above] at (0.5,0) {$\delta_1$};
\node[above=0.1, anchor=center] at ($(C)!0.3!(D)$) {$\delta_2$};
\node[left=0.2, anchor=center] at ($(A)!0.5!(C)$) {$\widetilde{\eta}_1$};
\node[right=0.2, anchor=center] at ($(B)!0.5!(D)$) {$\widetilde{\eta}_2$};

\node[below=0.3, anchor=center, color=blue] at (-0.5,1.4) {$\widetilde{\gamma}_2$};

\path (A) node[circle, fill, inner sep=1pt, label={[shift={(235:0.45)}, anchor=center]{$\widetilde{y}_1$}}]{};
\path (B) node[circle, fill, inner sep=1pt, label={[shift={(305:0.45)}, anchor=center]{$\widetilde{y}_2$}}]{};
\path (C) node[circle, fill, inner sep=1pt, label={[shift={(230:0.35)}, anchor=center]{$\widetilde{x}_1$}}]{};
\path (D) node[circle, fill, inner sep=1pt, label={[shift={(310:0.35)}, anchor=center]{$\widetilde{x}_2$}}]{};

\path (O) node[circle, fill, inner sep=1pt, label={[shift={(270:0.3)}, anchor=center]{$O$}}]{};

\end{tikzpicture}
\caption{\label{fig:boundary_length_annulus}
A covering of the annulus.}
\end{figure}

The geodesic $\delta_2$ between $\widetilde{x}_1$ and $\widetilde{x}_2$, $\widetilde{\eta}_1$, $\widetilde{\eta}_2$ and the geodesic $\delta_1$ between $\widetilde{y}_1$ and $\widetilde{y}_2$ form a Saccheri quadrilateral, and half of it is a Lambert quadrilateral. The property of the Lambert quadrilateral gives
\begin{equation*}\label{eqn:ends_annulus_length_1}
\sinh\bracket{\frac{\ell(\delta_2)}2}=\sinh\bracket{\frac{\ell\bracket{\delta_1}}2}\cosh\bracket{\ell\bracket{\widetilde{\eta}_1}}
\end{equation*}
We have $\ell(\gamma_2)\geqslant\ell(\delta_2)$, $\ell(\delta_1)=\ell\bracket{\gamma_1}$ and $\ell\bracket{\widetilde{\eta}_1}=\ell\bracket{\widetilde{\eta}_2}=d$. Hence
\begin{equation*}\label{eqn:ends_annulus_length_2}
\sinh\bracket{\frac{\ell(\gamma_2)}2}\geqslant\sinh\bracket{\frac{\ell(\gamma_1)}2}\cosh(d)
>\frac14 e^d\ell(\gamma_1)
\end{equation*}
When $\ell(\gamma_1)<e^{-d}$, we have $e^d\ell(\gamma_1)<1$. Since $\displaystyle{\sinh^{-1}(t)>\frac{24}{25}t}$ for $0<t<\frac14$, we have
$$\ell(\gamma_2)>2\sinh^{-1}\bracket{\frac14e^d\ell(\gamma_1)}>2\cdot\frac{24}{25}\cdot \frac14e^d\ell(\gamma_1)=\frac{12}{25}e^d\ell(\gamma_1)$$
\end{proof}

The following lemma estimates the injectivity radius for points in the neighborhood of cusps.

\begin{lemma}\label{lemma:injectivity_radius_cusp}
Let $x\in{N_3(c_i)}$ for a cusp $c_i$ in $\mathcal{Y}$. If $d(x,\partial{N(c_i)})=d$, then the injective radius at $x$ is $\sinh^{-1}(2e^{-d})$.
\end{lemma}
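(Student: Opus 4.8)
\emph{Plan.} The idea is to pass to the upper half-plane, normalise the cusp so that it is the quotient of a standard horoball, and compute the displacement of a lift of $x$ under the deck group directly; the only real subtlety is to rule out short geodesic loops at $x$ that leave the cusp $N(c_i)$.

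\emph{Setting up coordinates.} Following the model of \S\ref{sec:nbhds}, realise $c_i$ in the upper half-plane so that the stabiliser $\Gamma\leqslant\pi_1(\Sigma)$ of the puncture is generated by $z\mapsto z+4$ (rescaling the width to $4$ costs nothing). Then the area-$4$ cusp $N(c_i)$ is the image of the horoball $\set{y>1}$, its boundary horocycle $\partial N(c_i)$ is the image of $\set{y=1}$, and $N_3(c_i)$ is the image of $\set{y>4}$; vertical lines are geodesics meeting every horizontal horocycle orthogonally. Hence a lift $\widetilde{x}$ of $x$ may be taken to be $(0,Y)$, and since $d(x,\partial N(c_i))=d$ is realised along the vertical geodesic we get $Y=e^{d}$; as $x\in N_3(c_i)$ we also have $Y>4$, i.e. $d>\log 4$.

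\emph{The displacement.} Recall that the injectivity radius at $x$ equals $\tfrac12\min\set{d(\widetilde{x},g\widetilde{x}):g\in\pi_1(\Sigma)\setminus\{1\}}$, since $\bbh^2$ has no conjugate points. I would split this minimum according to whether $g\in\Gamma$. If $g\in\Gamma$ is $z\mapsto z+4n$ with $n\neq0$, the distance formula in the upper half-plane gives $\cosh d(\widetilde{x},g\widetilde{x})=1+\tfrac{8n^{2}}{Y^{2}}=1+8n^{2}e^{-2d}$, minimised at $n=\pm1$; by $\cosh t=1+2\sinh^{2}(t/2)$ this means $d(\widetilde{x},g\widetilde{x})=2\sinh^{-1}(2e^{-d})$ for $g=(z\mapsto z\pm4)$ and strictly larger otherwise. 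If $g\notin\Gamma$, then $g$ does not fix $\infty$, so $g\set{y\geqslant1}$ is a Euclidean horoball tangent to $\mathbb{R}$; since $N(c_i)$ is embedded in $\Sigma$, the horoballs $g\set{y>1}$ and $\set{y>1}$ have disjoint interiors, which forces $g\set{y\geqslant1}\subseteq\set{y\leqslant1}$, and hence $g\widetilde{x}$ has height $\leqslant1$ because $\widetilde{x}\in\set{y>1}$. Therefore $d(\widetilde{x},g\widetilde{x})\geqslant\log Y=d$.

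\emph{Conclusion.} It remains to compare the two lower bounds. The function $d\mapsto2\sinh^{-1}(2e^{-d})$ is strictly decreasing, $d\mapsto d$ is strictly increasing, and at $d=\log 4$ one has $2\sinh^{-1}(2e^{-\log 4})=2\sinh^{-1}(\tfrac12)=2\log\tfrac{1+\sqrt5}{2}<\log 4$; hence $2\sinh^{-1}(2e^{-d})<d$ for every $d>\log 4$. Consequently the minimum over $g\neq1$ is attained inside $\Gamma$ and equals $2\sinh^{-1}(2e^{-d})$, so the injectivity radius at $x$ is $\sinh^{-1}(2e^{-d})$. I expect the $g\notin\Gamma$ case to be the only genuine obstacle: it is exactly the embeddedness of $N(c_i)$ together with the fact that $N_3(c_i)$ begins at distance $\log 4$ from $\partial N(c_i)$ that guarantees no shorter loop escapes the cusp, and this is presumably why $N_3(c_i)$ is placed where it is.
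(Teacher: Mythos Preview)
Your argument is correct and follows the same upper half-plane normalisation as the paper (width $4$, boundary horocycle at height $1$, $N_3$ at height $4$), computing the translation length of the parabolic generator exactly as the paper does. Where you go further is in the case $g\notin\Gamma$: the paper's proof simply computes $d(\widetilde{x}_1,\widetilde{x}_2)$ for adjacent lifts and stops there, tacitly identifying this with twice the injectivity radius, whereas you actually verify that no element outside the parabolic stabiliser can move $\widetilde{x}$ a shorter distance. Your argument for this---that embeddedness of the area-$4$ cusp forces $g\{y>1\}\subset\{y\leqslant1\}$, hence $d(\widetilde{x},g\widetilde{x})\geqslant d>2\sinh^{-1}(2e^{-d})$ once $d>\log 4$---is clean and is precisely what justifies the lemma as an equality rather than merely an upper bound. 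In practice the paper only ever uses the lemma as a lower bound on the length of loops homotopic into the cusp (see the proof of Lemma~\ref{lemma:inj_radius_outside_nbhds}), which is why it can get away with the shorter computation; your version proves the stronger statement that is actually asserted.
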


\begin{proof}
In the universal cover of the cusp $c_i$, as in Figure \ref{fig:cusp_injectivity_radius}, 
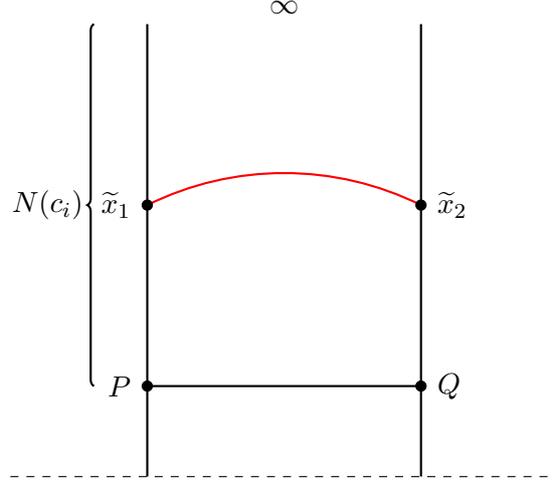
\begin{figure}[htbp]
\centering
\tikzexternaldisable
\begin{tikzpicture}
\def\x{1.2}

\coordinate (X1) at (-1.5*\x,3*\x);
\coordinate (X2) at (1.5*\x,3*\x);
\coordinate (O) at (0,0);

\pgfmathanglebetweenpoints{\pgfpointanchor{O}{center}}{\pgfpointanchor{X2}{center}}
\pgfmathsetmacro{\angleOX}{\pgfmathresult}
\draw[name path=circle_x_x, color=red, thick]
  let \p1 = ($ (X1) - (O) $),  \n1 = {veclen(\x1,\y1)} in
  (X2) arc (\angleOX:{180-\angleOX}:\n1);

\draw[thick] (-1.5*\x,0) to (-1.5*\x,5*\x);
\draw[thick] (1.5*\x,0) to (1.5*\x,5*\x);


\draw[thick] (-1.5*\x, \x) to (1.5*\x,\x);


\draw[dashed] (-3*\x,0) to (3*\x,0);

\path (-1.5*\x,\x) node[circle, fill, inner sep=1.5pt, label=left:{$P$}]{};
\path (1.5*\x,\x)  node[circle, fill, inner sep=1.5pt, label=right:{$Q$}]{};


\path (-1.5*\x,3*\x) node[circle, fill, inner sep=1.5pt, label=left:{$\widetilde{x}_1$}]{};
\path (1.5*\x,3*\x)  node[circle, fill, inner sep=1.5pt, label=right:{$\widetilde{x}_2$}]{};

\draw [thick, decoration={
	brace, raise=0.1cm
	}, decorate
] (-2*\x, 1*\x) -- (-2*\x, 5*\x) 
	node [pos=0.5,anchor=east,xshift=-0.1cm] {$N(c_i)$}; 

\draw (0,5*\x) node[above] {$\infty$};

\end{tikzpicture}
\tikzexternalenable
\caption{\label{fig:cusp_injectivity_radius}
Injectivity radius near a cusp.}
\end{figure}
let $PQ$ be a lift of $\partial N(c_i)$, $\widetilde{x}_1$ and $\widetilde{x}_2$ be two lifts of $x$ and $\gamma$ be the geodesic connecting them. Without loss of generality, we may assume that $P=(-2,1)$ and $Q=(2,1)$. Since $d(\widetilde{x}_1, P)=d$, $\widetilde{x}_1=(-2, e^d)$. Hence we have
$$2\sinh^2\bracket{\frac{\ell(\gamma)}2}=\frac{\bracket{2-(-2)}^2}{2 (e^d)^2}$$
Hence
$$\sinh\bracket{\frac{\ell(\gamma)}2}=\frac{2}{e^d}$$

\end{proof}

\begin{lemma}\label{lemma:nbhd_N3_disjoint}
For any two distinct $c_i,c_j\in\calx\cup\caly$, we have $N_3(c_i)\cap N_3(c_j)=\emptyset$.
\end{lemma}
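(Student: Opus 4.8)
The plan is to treat three cases according to whether each of $c_i$ and $c_j$ is a short closed geodesic (an element of $\calx$) or a cusp (an element of $\caly$), reducing in each case an intersection point $x\in N_3(c_i)\cap N_3(c_j)$ to a configuration in $\bbh^2$ that is incompatible with discreteness. The case $c_i,c_j\in\calx$ is the easiest: distinct members of $\calx$ are disjoint simple closed geodesics, so by the full strength of the collar lemma (\cite[Lemma 13.6]{FM2012}) their collars $N(c_i)$ and $N(c_j)$, of half-widths $w(\ell(c_i))$ and $w(\ell(c_j))$, are embedded and \emph{pairwise disjoint}; since $w(\ell(c_k))>\log(4/\ell(c_k))>\log(1/\ell(c_k))$ for $k=i,j$ by \eqref{eqn:collar_length}, one has $N_3(c_k)\subseteq N(c_k)$ and the conclusion follows.

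Now suppose at least one of the two curves, say $c_j$, is a cusp. I would normalize the holonomy so that $c_j$ is the cusp at $\infty$, the generator of $\mathrm{Stab}_\Gamma(\infty)$ (with $\Gamma=\pi_1\Sigma$) is $z\mapsto z+1$, the embedded area-$4$ cusp $N(c_j)$ lifts to the horoball $\{\operatorname{Im}z\ge\tfrac14\}$, and the open sub-cusp $N_3(c_j)$ lifts to $\{\operatorname{Im}z>1\}$. Given $x\in N_3(c_i)\cap N_3(c_j)$, choose the lift $\widetilde x$ of $x$ inside $\{\operatorname{Im}z>1\}$; then $A\colon z\mapsto z+1$ satisfies $\sinh\!\bigl(\tfrac12 d(\widetilde x,A\widetilde x)\bigr)=\tfrac1{2\operatorname{Im}\widetilde x}<\tfrac12$. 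Since $\widetilde x$ also lies in some $\Gamma$-translate of a lift of $N_3(c_i)$, whose stabiliser is generated by a conjugate $B\in\Gamma$ of the peripheral element of $c_i$ (parabolic if $c_i\in\caly$, hyperbolic of translation length $\ell(c_i)$ if $c_i\in\calx$), and since $c_i\ne c_j$ are disjoint and non-homotopic, $B$ and $A$ have no common fixed point on $\partial\bbh^2$; in particular $B$ does not fix $\infty$, so its lower-left matrix entry $c$ is nonzero. Because $\{\operatorname{Im}z>\tfrac14\}$ is disjoint from each of its nontrivial $\Gamma$-translates (this is exactly the embeddedness of $N(c_j)$, using Lemma \ref{lemma:cusp_maximal_area}), those translates have Euclidean diameter at most $\tfrac14$, whence $|c|\ge4$ (Shimizu's lemma alone gives $|c|\ge1$, which already suffices).

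The heart of the argument is then the elementary identity
\[
\sinh\!\bigl(\tfrac12 d(z,Bz)\bigr)=\frac{\abs{c\,z^2+(d-a)z-b}}{2\operatorname{Im}z}=\frac{\abs{c}\,\abs{z-\xi_+}\,\abs{z-\xi_-}}{2\operatorname{Im}z},\qquad B=\begin{pmatrix}a&b\\ c&d\end{pmatrix},
\]
in which $\xi_\pm$ are the fixed points of $B$; here they are real, so $\abs{z-\xi_\pm}\ge\operatorname{Im}z$ and hence
\[
\sinh\!\bigl(\tfrac12 d(z,Az)\bigr)\cdot\sinh\!\bigl(\tfrac12 d(z,Bz)\bigr)\ \ge\ \frac{\abs{c}}{4}\ \ge\ 1\qquad\text{for all }z\in\bbh^2 .
\]
To contradict this at $z=\widetilde x$ it remains only to bound $\sinh\!\bigl(\tfrac12 d(\widetilde x,B\widetilde x)\bigr)$ from above by $\tfrac12$. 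If $c_i$ is a cusp, conjugating $B$ to $z\mapsto z+1$ and using that $\widetilde x$ then lies above the length-$1$ horocycle gives $\sinh\!\bigl(\tfrac12 d(\widetilde x,B\widetilde x)\bigr)<\tfrac12$. If $c_i\in\calx$, then $\widetilde x$ is within distance $\log(1/\ell(c_i))$ of the axis of $B$, so $\sinh\!\bigl(\tfrac12 d(\widetilde x,B\widetilde x)\bigr)=\cosh\!\bigl(d(\widetilde x,\operatorname{axis}B)\bigr)\sinh\!\bigl(\tfrac12\ell(c_i)\bigr)<\cosh\!\bigl(\log\tfrac1{\ell(c_i)}\bigr)\sinh\!\bigl(\tfrac12\ell(c_i)\bigr)$, and a one-line estimate (using $\sinh t<t\cosh t$ for $t>0$ and $\ell(c_i)<e^{-L/4}\le\sqrt2-1<\tfrac12$) shows the right-hand side is $<\tfrac12$. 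In either case $\sinh\!\bigl(\tfrac12 d(\widetilde x,A\widetilde x)\bigr)\sinh\!\bigl(\tfrac12 d(\widetilde x,B\widetilde x)\bigr)<\tfrac14$, contradicting the display above.

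I expect the only genuinely delicate part is the soft topology: checking that two distinct members of $\calx\cup\caly$ are disjoint and non-homotopic, so that the cyclic subgroups they generate share no endpoint on $\partial\bbh^2$ (in particular $B$ cannot fix $\infty$; recall that a discrete subgroup of $\mathrm{PSL}_2(\mathbb R)$ cannot contain a hyperbolic and a parabolic element with a common fixed point), together with keeping track of which component of $p^{-1}(N_3(c_i))$ contains the chosen lift $\widetilde x$. The hyperbolic-geometry estimates themselves are all routine.
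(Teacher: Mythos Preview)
Your proof is correct and takes a genuinely different route from the paper's.

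For the $\calx$--$\calx$ case you invoke the full collar lemma (collars of disjoint simple closed geodesics are pairwise disjoint) and observe $N_3(c_k)\subset N(c_k)$; this is considerably shorter than the paper's argument, which instead proves the distance estimate $d(c_i,c_j)>\log(2/\ell(c_i))$ by hand, places a point $x_0$ on the minimizing geodesic between $c_i$ and $c_j$, builds a short loop through $x_0$ freely homotopic to $c_i$, and then applies Lemma~\ref{lemma:ends_annulus_length} inside $N(c_j)$ to reach a numerical contradiction.

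For the cases involving a cusp your argument is a Shimizu/J{\o}rgensen-type inequality: working in $\bbh^2$ with $A\colon z\mapsto z+1$, you use the identity $\sinh\bigl(\tfrac12 d(z,Bz)\bigr)=\tfrac{|c|\,|z-\xi_+|\,|z-\xi_-|}{2\operatorname{Im}z}$ together with precise invariance of the height-$\tfrac14$ horoball (from Lemma~\ref{lemma:cusp_maximal_area}) to obtain $\sinh\bigl(\tfrac12 d(z,Az)\bigr)\sinh\bigl(\tfrac12 d(z,Bz)\bigr)\ge |c|/4\ge 1$, and then contradict this at $z=\widetilde x$ by bounding each factor by $\tfrac12$. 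The paper instead argues synthetically: the boundary horocycle $\partial N_3(c_i)$ has length~$1$, so if it meets $N_3(c_j)$ at all it lies entirely in $N(c_j)$, forcing $c_j$ to be isotopic into the cusp at $c_i$.

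What each approach buys: your argument is cleaner, uses standard discrete-group machinery, and makes transparent why the particular radii of the $N_3$ neighborhoods work (each gives a displacement with $\sinh(\tfrac12 d)<\tfrac12$). The paper's approach is more self-contained, relying only on the annulus estimate Lemma~\ref{lemma:ends_annulus_length} and avoiding any appeal to matrix entries or Shimizu's lemma; it stays within the elementary-geometric toolkit developed in \S\ref{sec:nbhds}. Your remark that the ``soft topology'' is the only subtle point is accurate but perhaps overstated: that distinct elements of $\calx\cup\caly$ give non-commuting peripheral subgroups with disjoint fixed-point sets on $\partial\bbh^2$ is standard for torsion-free Fuchsian groups.
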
  

\begin{proof} We claim that
\begin{equation}\label{eqn:lemma_nbhd_N3_disjoint_1}
d(c_i,c_j)>\log{\frac{2}{\ell(c_i)}}=\log\frac{1}{\ell(c_i)}+\log2
\end{equation}
If $c_i$ and/or $c_j$ are punctures, $d(c_i,c_j)=\infty$ and the claim is trivial. Otherwise, there exists $x\in c_j$ that $d(x,c_i)\leqslant\log{\frac{2}{\ell(c_i)}}$. Since $\ell(c_j)<0.42<\log2$, for any point $y$ in $c_j$, we have
$$d(y,c_i)\leqslant d(y,x)+d(x,c_i)<\log\frac{2}{\ell(c_i)}+\log2=\log\frac{4}{\ell(c_i)}<w(\ell(c_i))$$
Hence $c_j\subseteq{N(c_i)}$ and this contradicts the uniqueness of closed geodesics in $N(c_i)$. The claim follows.

Suppose that $N_3(c_i)\cap{N_3(c_j)}\neq\emptyset$.	
\begin{enumerate}
\item If $c_i$ and $c_j$ are both curves in $\calx$, we have
\begin{equation}
\max\set{\log{\frac{2}{\ell(c_i)}},\log{\frac{2}{\ell(c_j)}}}<{d(c_i,c_j)}<{\log{\frac{1}{\ell(c_i)}}+\log{\frac{1}{\ell(c_j)}}}
\end{equation}
Let $\gamma$ be (one of) the shortest geodesic(s) connecting $c_i$ and $c_j$. Then $\ell(\gamma)=d(c_i,c_j)$ and there exists a point $x_0$ on $\gamma$ such that 
$$d(x_0,c_i)<{\log{\frac{1}{\ell(c_i)}}-\log2},\qquad d(x_0,c_j)=\log{\frac{1}{\ell(c_j)}}+\log2.$$
Hence $x_0\in{N(c_i)\cap{N(c_j)}}$. 



Let $\gamma$ the shortest nontrivial closed curve passes $x_0$ freely homotopic to $c_i$. $\gamma$ is locally geodesic at $\gamma\setminus\set{x_0}$. In $N(c_i)$, using the formula in Lambert quadrilteral we get
$$\ell(\gamma)\leqslant\sinh{\ell(\gamma)}=\sinh{\ell(c_i)}\cosh{d(x_0,c_i)}<{\sinh{\ell(c_i)}\cosh\bracket{{\log{\frac{1}{\ell(c_i)}}-\log2}}}\leqslant{\frac{e^{\ell(c_i)}-e^{-\ell(c_i)}}{4\ell(c_i)}}<\frac{3}{5}$$
Here we use the fact that $\ell(c_i)<\frac12$ and $\sinh{t}<\frac{6}{5}t$ for $0<t<\frac{1}{2}$. 

For any $y\in\gamma$, we have
$$d(y, c_j)\leqslant d(y,x_0)+d(x_0,c_j)\leqslant \frac3{10}+\log\frac1{\ell(c_j)}+\log2<\log\frac4{\ell(c_j)}<w(\ell(c_j))$$
Hence $\gamma\subset N(c_j)$.
By Lemma \ref{lemma:ends_annulus_length}, we get
	$$\frac3{5}>\ell(\gamma)>\frac{12}{25}{\ell(c_j)\exp\bracket{\log{\frac{2}{\ell(c_j)}}}}=\frac{24}{25}$$
The lemma follows from the contradiction.

\item If $c_i$ is a puncture in $\caly$. The boundary curve $\gamma_i$ of $N_3(c_i)$ has length 1. If some point $x\in \gamma_i\cap N_3(c_j)$, then for any $y\in\gamma_i$, we have
$$d(y, N_3(c_j))\leqslant d(x,y)\leqslant\frac12<2\log 2$$
It follows that $\gamma_i$ lies within $N(c_j)$. Either the two punctures are the same or the short geodesic $c_j$ can be isotoped into the cusp of $c_i$. A contradiction.
\end{enumerate}
\end{proof}

Now we estimate the injectivity radius for points in the thick part $\caln$.

\begin{lemma}\label{lemma:inj_radius_outside_nbhds}
The injectivity radius of any point in the thick part of $\Sigma$
is at least 
$$\sinh^{-1}\bracket{\frac14 e^{-\frac{L}4}}>\frac{25}{101}e^{-\frac{L}{4}}$$.
\end{lemma}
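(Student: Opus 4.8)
The plan is to argue by contradiction. Suppose $x$ lies in the thick part $\Sigma\setminus\caln$ but its injectivity radius $\rho$ satisfies $\rho<\sinh^{-1}\bracket{\tfrac14 e^{-L/4}}$; I will show $x$ must then belong to $N_2(c)$ for some $c\in\calx\cup\caly$, contradicting $x\notin\caln$. First I would pass to the universal cover $p\colon\bbh^2\to\Sigma$ with its torsion-free Fuchsian group $\Gamma$, fix a lift $\widetilde x$ of $x$, and use that $2\rho=\min_{g\in\Gamma\setminus\set{1}}d(\widetilde x,g\widetilde x)$, the minimum being attained (by discreteness) at some $g\neq1$. Since the displacement of a hyperbolic or a parabolic isometry at a point is strictly increasing in the power, $g$ may be taken primitive, so it corresponds either to a closed geodesic or to a cusp. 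Observe at the outset that $2\rho<2\sinh^{-1}\bracket{\tfrac14 e^{-L/4}}\leqslant\tfrac12 e^{-L/4}<e^{-L/4}$, using $\sinh^{-1}t\leqslant t$.

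In the hyperbolic case, the axis of $g$ projects to a closed geodesic $c$ with $\ell(c)=\tau(g)\leqslant d(\widetilde x,g\widetilde x)=2\rho<e^{-L/4}$, so $c\in\calx$ and $N_2(c)$ is defined. The Lambert quadrilateral relation used in the proof of Lemma \ref{lemma:ends_annulus_length} (equivalently, the displacement formula for a hyperbolic isometry), applied to the Saccheri quadrilateral spanned by $\widetilde x$, $g\widetilde x$ and their feet on $\operatorname{axis}(g)$, gives, with $s=d\bracket{\widetilde x,\operatorname{axis}(g)}\geqslant d(x,c)$,
$$\sinh\rho=\sinh\bracket{\tfrac12 d(\widetilde x,g\widetilde x)}=\cosh s\cdot\sinh\tfrac{\ell(c)}2\geqslant\cosh\bracket{d(x,c)}\sinh\tfrac{\ell(c)}2 .$$
Combining $\sinh\rho<\tfrac14 e^{-L/4}$ with $\sinh\tfrac{\ell(c)}2>\tfrac{\ell(c)}2$ and $\cosh t>\tfrac12 e^t$ and solving for $d(x,c)$ yields $d(x,c)<\log\tfrac1{\ell(c)}-\tfrac L4$, i.e.\ $x\in N_2(c)$, a contradiction.

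In the parabolic case, $g$ fixes a puncture $c\in\caly$; I would normalize coordinates so that $g\colon z\mapsto z+4$ and $\partial N(c)$ lifts to the horocycle $\set{y=1}$, whence $\partial N_2(c)$ lifts to $\set{y=4e^{L/4}}$ and $p^{-1}\bracket{N_2(c)}$ contains the $g$-invariant horoball $\set{y>4e^{L/4}}$. Since $x$ is in the thick part, $\widetilde x\notin\set{y>4e^{L/4}}$, so $\operatorname{Im}\widetilde x\leqslant4e^{L/4}$; a direct computation in the upper half-plane then gives $\sinh\rho=\sinh\bracket{\tfrac12 d(\widetilde x,g\widetilde x)}=2/\operatorname{Im}\widetilde x\geqslant\tfrac12 e^{-L/4}>\tfrac14 e^{-L/4}$, again contradicting $\rho<\sinh^{-1}\bracket{\tfrac14 e^{-L/4}}$. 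Both cases being impossible, every point of the thick part has injectivity radius at least $\sinh^{-1}\bracket{\tfrac14 e^{-L/4}}$. The remaining inequality $\sinh^{-1}\bracket{\tfrac14 e^{-L/4}}>\tfrac{25}{101}e^{-L/4}$ is elementary: since $L\geqslant4\log(1+\sqrt2)$, the argument of $\sinh^{-1}$ lies in $\bracket{0,\tfrac{\sqrt2-1}4}$, on which $\sinh^{-1}t>\tfrac{100}{101}t$ (e.g.\ from $\sinh^{-1}t>t-\tfrac{t^3}6$).

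I expect the main obstacle to be the bookkeeping in the parabolic case: one must ensure that ``$x$ lies in the thick part'' really bounds the height of the chosen lift $\widetilde x$ relative to the cusp attached to the minimizing isometry $g$, not merely relative to some unrelated cusp. This is settled by noting that $p^{-1}\bracket{N_2(c)}$ contains the whole horoball $\set{y>4e^{L/4}}$, which is $g$-invariant, so $x\notin N_2(c)$ forces $\operatorname{Im}\widetilde x\leqslant4e^{L/4}$ immediately. A secondary point requiring care is the reduction to a primitive minimizer and the verification that the geodesic $c$ arising in the hyperbolic case really satisfies $\ell(c)<e^{-L/4}$, so that it belongs to $\calx$ and $N_2(c)$ is defined.
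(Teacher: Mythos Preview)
Your argument is correct and follows the same overall strategy as the paper: assume the injectivity radius is too small, pick a minimizing deck transformation, and split into the hyperbolic and parabolic cases to force $x\in N_2(c)$ for some $c\in\calx\cup\caly$.

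The one place where you differ slightly is in the hyperbolic case. The paper produces the short loop $\gamma$ through $x$, observes that its geodesic representative $c_i$ lies in $\calx$, and then splits into two subcases according to whether $c_i\cap\gamma=\emptyset$: when they are disjoint it invokes Lemma~\ref{lemma:ends_annulus_length} on the annulus they cobound, and when they meet it argues separately that $d(x,c_i)\leqslant\ell(\gamma)/2$ is already small enough. You bypass this split by applying the Saccheri/Lambert displacement identity $\sinh\bracket{\tfrac12 d(\widetilde x,g\widetilde x)}=\cosh s\,\sinh\tfrac{\ell(c)}2$ directly in the cover; this single formula covers both of the paper's subcases (the case $s=0$ being trivial) and is arguably cleaner. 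The parabolic case and the final elementary estimate $\sinh^{-1}t>\tfrac{100}{101}t$ on $\bracket{0,\tfrac{\sqrt2-1}4}$ are handled exactly as in the paper.
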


\begin{proof} We prove the lemma by contradiction. Let $x$ be a point in the thick part $\Sigma\setminus\caln$ and suppose that the injectivity radius $r_0$ at $x$ satisfies
$$\sinh(r_0)<\frac{1}{4}e^{-\frac{L}{4}}.$$
There exists a homotopically nontrivial simple closed curve $\gamma$ through $x$ with $\ell(\gamma)=2r_0$. $\gamma$ is freely homotopic to either a (unique) simple closed geodesic $\gamma^\prime$, or a puncture $c$ of $\Sigma$.

Suppose that $\gamma$ is freely homotopic to a puncture $c_i$ of $\Sigma$. Let $\widetilde{x}_1$ and $\widetilde{x}_2$ be lifts of $x$ as in Lemma \ref{lemma:injectivity_radius_cusp}. Then $d(\widetilde{x}_1,\widetilde{x}_2)=2r_0<\frac12$ and hence $x\in N_3(c_i)$. Now $x\notin N_2(c_i)$ and $d(x,\partial N(c_i))\leqslant2\log 2+\frac{L}4$. By Lemma \ref{lemma:injectivity_radius_cusp}, we have
$$\sinh(r_0)\geqslant\frac{2}{e^{2\log2+\frac{L}4}}=\frac{1}{2}e^{-\frac{L}4}>\frac14e^{-\frac{L}4}$$

Now suppose that $\gamma$ is freely homotopic to a simple closed geodesic $\gamma^\prime$. Then $\ell(\gamma^\prime)\leqslant 2r_0<e^{-L/4}$ and hence $\gamma^\prime$ is a curve $c_i$ in $\calx$. There are two cases according to whether $c_i\cap \gamma=\emptyset$.

\begin{enumerate}
\item If $c_i\cap\gamma=\emptyset$, then $c_i$ and $\gamma$ co-bound an annulus since they are homotopic and disjoint in $\Sigma$. Since $x\notin N_2(c_i)$, we have $d(x,c_i)\geqslant\log\frac{1}{\ell(c_i)}-\frac{L}4$. By Lemma \ref{lemma:ends_annulus_length}, we have
$$\sinh(r_0)>\frac14e^{d(x,c_i)}\ell(c_i)\geqslant\frac14\exp\bracket{\log\frac{1}{\ell(c_i)}-\frac{L}4}\ell(c_i)=\frac14 e^{-\frac{L}4}.$$
Hence
\begin{equation}
r_0>\sinh^{-1}\bracket{\frac14 e^{-\frac{L}4}}>\frac{100}{101}\cdot\frac14 e^{-\frac{L}4}=\frac{25}{101}e^{-\frac{L}4}
\end{equation}
since 
$$\frac14e^{-L/4}\leqslant\frac{\sqrt{2}-1}4\qquad\text{and}\qquad\sinh^{-1}(t)/t>\frac{100}{101},\quad\forall\,t\in\left(0,\frac{\sqrt{2}-1}4\right].$$

\item
If $c_i\cap\gamma\neq\emptyset$, let $x_0\in c_i\cap\gamma$. If $r_0<\sinh^{-1}(\frac14e^{-{L}/4})<\frac14e^{-{L}/4}\leqslant\frac14(\sqrt{2}-1)<0.11$, then $\ell(c_i)\leqslant\ell(\gamma)<\frac12e^{-{L}/{4}}$, then ${\log{\frac{1}{\ell(c_i)}}-\frac{L}{4}}>\log2>0.69$. Hence 
$$d(c_i,x)\leqslant d(x_0,x)\leqslant\frac{\ell(\gamma)}2\sim0.11<0.69<\log{\frac{1}{\ell(c_i)}}-\frac{L}{4} $$
If follow that $x\in N_2(c_i)$ and we get a contradiction.
\end{enumerate}
\end{proof}

\clearpage
\section{Estimate of self-intersections}\label{sec:estimate}

Let $\Gamma$ be a closed geodesic with Let $L=\ell(\Gamma)\geqslant4\log(1+\sqrt{2})>3.5$ on an orientable, metrically complete hyperbolic surface $\Sigma$ of finite type. Let $\calx$ be the set of short geodesics, and $\caly$ be the set of punctures, as in \S\ref{sec:nbhds}. Suppose $\Gamma$ is represented as a local isometry $f:S^1\to\Sigma$, where $S^1$ is a circle with length $L$. Let $\caln=\bigcup_{c\in\calx\cup\caly}N_2(c)$ be the thin part of $\Sigma$. Let $\cald\subset\Sigma$ be the set of self-intersection points of $\Gamma$, that is,
$$\cald=\set{x\in\Sigma\,:\, \exists\,s,t\in S^1, f(s)=f(t)=x, s\neq t}$$
The \emph{self-intersection number} of $\Gamma$ is defined as
\begin{equation}\label{eqn:self_intersection_number_defn}
\abs{\Gamma\cap\Gamma}:=\sum_{x\in \cald}\binom{\# f^{-1}(x)}{2}
\end{equation}
(See \cite[Section 2]{B2803} for a further discussion on the definition of the self-intersection number.)


\begin{prop}\label{prop:no_intersection_with_N1}
$\Gamma$ is disjoint from the $N_1$ neighborhood of short geodesics in $\calx$ and cusps in $\caly$, that is, 
$\Gamma\cap{\bigcup_{c\in\calx\cup\caly}N_1(c)}=\emptyset$.
\end{prop}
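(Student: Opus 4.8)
The plan is to argue by contradiction: suppose some point $x \in \Gamma \cap N_1(c)$ for $c \in \calx \cup \caly$, and show that then $\Gamma$ must be trapped in a neighborhood of $c$ in a way that forces $\Gamma$ to be either a short simple geodesic or homotopic into a cusp, contradicting $\ell(\Gamma) = L \geqslant 4\log(1+\sqrt2)$ (so $\Gamma$ is long, hence in particular not one of the $c_i$). The key geometric mechanism is the same ``trapping'' idea already used repeatedly in \S\ref{sec:nbhds}: the collar $N(c)$ has width $w(\ell(c)) > 2.26$, which is large compared to $\ell(\Gamma\text{-pieces})$ only locally, so I cannot bound $\Gamma$ globally; instead I should track how far $\Gamma$ can travel outward from $c$ before it must leave $N_1$.

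First I would set up coordinates in the universal cover. For $c \in \calx$ a short geodesic, lift to the annular cover $\widetilde{A} \to \Sigma$ associated to $\langle c \rangle$, with $\widetilde c$ the horizontal diameter of the Poincaré disk (as in Lemma~\ref{lemma:ends_annulus_length}); for $c \in \caly$ a cusp, work in the horoball picture of Figure~\ref{fig:nbhd_cusps}. The definition gives $N_1(c) = \{d(\cdot, c) < \log(1/\ell(c)) - L/2\}$ in the geodesic case, and in the cusp case $\partial N_1(c)$ is the Euclidean circle of length $e^{-L/2}$. Suppose $x \in \Gamma \cap N_1(c)$. The strategy is: follow $\Gamma$ starting from $x$; since $\ell(\Gamma) = L$, every point of $\Gamma$ is within distance $L/2$ along $\Gamma$ (hence within hyperbolic distance $L/2$) of $x$. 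So $\Gamma$ is entirely contained in the metric $L/2$-neighborhood of $x$. Since $d(x,c) < \log(1/\ell(c)) - L/2$, the triangle inequality gives that every point $y$ of $\Gamma$ satisfies $d(y,c) \leqslant d(y,x) + d(x,c) < L/2 + \log(1/\ell(c)) - L/2 = \log(1/\ell(c)) < w(\ell(c))$ by \eqref{eqn:collar_length}. Hence $\Gamma \subset N(c)$, the embedded collar (resp.\ the embedded cusp neighborhood $N(c_i)$).

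Now I finish in each case. If $c \in \calx$: $N(c)$ is an embedded hyperbolic annulus whose only closed geodesic is $c$ itself, so $\Gamma$, being a closed geodesic contained in $N(c)$, must equal $c$ — but $\ell(c) < e^{-L/4} < 0.42$ while $\ell(\Gamma) = L > 3.5$, contradiction. If $c \in \caly$ is a cusp: the neighborhood $N(c_i)$ (interior of the area-$4$ cusp) is homeomorphic to $S^1 \times [1,\infty)$ and contains no closed geodesic at all (a cusp neighborhood lifts to a horoball quotient, and any closed geodesic in $\Sigma$ lies outside every embedded cusp, or equivalently a closed curve in $N(c_i)$ is either null-homotopic or homotopic to the puncture, hence never a geodesic), again contradicting that $\Gamma$ is a closed geodesic. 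Either way we get the desired contradiction, so $\Gamma \cap \bigcup_{c \in \calx \cup \caly} N_1(c) = \emptyset$.

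The main obstacle I anticipate is making the ``$\Gamma$ is within distance $L/2$ of $x$, therefore trapped in the collar'' step fully rigorous at the level of the covering space rather than just the triangle inequality downstairs: one must ensure the arc of $\Gamma$ from $x$ outward lifts into the chosen annular/horoball cover (so that ``$d(y,c) < w(\ell(c))$ for all $y$'' genuinely forces $\Gamma$ into the \emph{embedded} collar, not merely into an immersed neighborhood). This is handled by noting that the collar $N(c)$ is embedded with width $> 2.26 > L/2$... wait — $L/2$ can be large, so I instead use that $\log(1/\ell(c)) < w(\ell(c))$ and that the whole of $\Gamma$ stays within $\log(1/\ell(c))$ of $c$; since $N(c)$ with the full width $w(\ell(c))$ is embedded, any path from $x \in N_1(c) \subset N(c)$ of length $< L/2$ staying within distance $\log(1/\ell(c)) < w(\ell(c))$ of $c$ cannot escape $N(c)$, and connectedness of $\Gamma$ then places all of $\Gamma$ in the one embedded component. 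The cusp case is analogous using that the area-$4$ cusp neighborhood is embedded. Once embeddedness is secured, the contradiction with $\ell(\Gamma) = L$ being large is immediate.
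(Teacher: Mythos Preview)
Your proposal is correct and follows essentially the same approach as the paper: assume $x_0 \in \Gamma \cap N_1(c_i)$, use $\ell(\Gamma)=L$ to get $d(x_0,x)\leqslant L/2$ for all $x\in\Gamma$, apply the triangle inequality to conclude $\Gamma\subset N(c_i)$, and derive a contradiction from the fact that an embedded collar (resp.\ cusp neighborhood) contains no closed geodesic other than $c_i$ itself (resp.\ none at all). Your extended discussion of covering spaces and embeddedness is unnecessary worry---since $N(c_i)$ is by construction the embedded collar of width $w(\ell(c_i))>\log(1/\ell(c_i))$, the triangle inequality downstairs in $\Sigma$ already places each $x\in\Gamma$ in this embedded set, exactly as the paper does in two lines.
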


\begin{proof}
Suppose that there exists a point $x_0\in N_1(c_i)\cap\Gamma$ for some $c_i\in\calx$. Since $\ell(\Gamma)=L$, we have $d(x_0,x)\leqslant\frac L2$ for any point $x\in\Gamma$ and
$$d(x,c_i)\leqslant d(x_0,c_i)+\frac{L}{2}<{\log\frac{1}{\ell(c_i)}}<w(\ell(c_i))$$
Hence $x\in{N(c_i)}$ and $\Gamma\subset N(c_i)$. A contradiction. The case for cusps is similar.
\end{proof}

\begin{prop}\label{prop:intersection_with_N2}
There exists at most one $c_i\in\calx\cup\caly$ such that $\Gamma\cap{N_2(c_i)}\neq\emptyset$. $\Gamma\cap{N_2(c_i)}$ is an arc when $\Gamma\cap{N_2(c_i)}\neq\emptyset$.
\end{prop}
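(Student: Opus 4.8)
The plan is to prove both assertions by contradiction, leveraging the length budget $\ell(\Gamma)=L$ together with the geometry of the nested collars $N_1 \subset N_2 \subset N_3 \subset N$ established in \S\ref{sec:nbhds}. The key quantitative fact is that each $N_2(c_i)$ is separated from the rest of the surface by a definite distance: by construction $d(\partial N_2(c_i), \partial N_3(c_i)) = L/4$ in the geodesic case and likewise in the cusp case, and by Lemma \ref{lemma:nbhd_N3_disjoint} the sets $N_3(c_i)$ are pairwise disjoint. So a path in $\Sigma$ that visits two different $N_2$-neighborhoods, or that enters and leaves a single $N_2$-neighborhood more than once, must spend length at least $L/2$ crossing the collar regions $N_3 \setminus N_2$, and combined with Proposition \ref{prop:no_intersection_with_N1} (so $\Gamma$ never reaches into $N_1$) this will exceed the total length $L$ unless something degenerate happens.

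First I would prove the ``at most one'' claim. Suppose $\Gamma$ meets both $N_2(c_i)$ and $N_2(c_j)$ with $c_i \neq c_j$ in $\calx \cup \caly$. Pick parameters $s, t \in S^1$ with $f(s) \in N_2(c_i)$ and $f(t) \in N_2(c_j)$. The two arcs of $S^1$ between $s$ and $t$ give two subarcs of $\Gamma$ joining $N_2(c_i)$ to $N_2(c_j)$; each such subarc must cross the annular (resp. triangular) region $N_3(c_i)\setminus N_2(c_i)$ and the region $N_3(c_j)\setminus N_2(c_j)$, since $N_3(c_i)$ and $N_3(c_j)$ are disjoint by Lemma \ref{lemma:nbhd_N3_disjoint} and a path from inside $N_2(c_i)$ to inside $N_2(c_j)$ must pass through $\partial N_3(c_i)$ and $\partial N_3(c_j)$. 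Now $d\bigl(N_2(c_i), \partial N_3(c_i)\bigr) = L/4$ for a geodesic $c_i$, and for a cusp $d\bigl(\partial N_2(c_i), \partial N_3(c_i)\bigr) = L/4$ as well (the three boundary Euclidean circles have lengths $e^{-L/2}, e^{-L/4}, 1$, successive log-distances $L/4$), so each of the two subarcs of $\Gamma$ has length at least $L/4 + L/4 = L/2$, giving $\ell(\Gamma) \geqslant L/2 + L/2 = L$. Equality would force each subarc to be a geodesic segment realizing these distances with no extra length, hence $\Gamma$ is entirely contained in $N_3(c_i) \cup N_3(c_j)$ and in fact pinched down to the two crossing segments; a short argument then shows $\Gamma$ cannot close up (it would have to be homotopically trivial or be forced into a single $N_3$), contradicting that $\Gamma$ is a closed geodesic.

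Next, for the second claim, assume $\Gamma \cap N_2(c_i) \neq \emptyset$ and suppose $\Gamma \cap N_2(c_i)$ is not a single arc, i.e.\ $f^{-1}(N_2(c_i))$ has at least two connected components (arcs) $I_1, I_2 \subset S^1$. Again the two components of $S^1 \setminus (I_1 \cup I_2)$ give two subarcs of $\Gamma$, each running from $\partial N_2(c_i)$ out to $\partial N_2(c_i)$; each such subarc, if it leaves $N_2(c_i)$ at all, must exit $N_2(c_i)$, reach $\Sigma \setminus N_3(c_i)$ and come back — but wait, it need only leave $N_2$; however since $N_3(c_i)$ is a topological annulus (or cusp) whose core is the simple curve $c_i$, a subarc of $\Gamma$ between two components of $f^{-1}(N_2(c_i))$ that stays inside $N_3(c_i)$ would be homotopic rel endpoints into $N_2(c_i)$, contradicting it being a geodesic subarc of $\Gamma$ exiting $N_2$ — so it must reach $\partial N_3(c_i)$ and return, costing length $\geqslant 2 \cdot L/4 = L/2$ each, hence $\ell(\Gamma) \geqslant L$, and the same equality analysis as above yields a contradiction. (An alternative, cleaner route: a geodesic entering a collar $N(c_i)$ either crosses it once monotonically or turns around; the convexity of the distance function to $c_i$ in $N(c_i)$, via the Lambert quadrilateral formula used repeatedly in \S\ref{sec:nbhds}, bounds how deep it can penetrate in terms of how far it travels, again forcing a length-$L/2$ lower bound per excursion.)

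The main obstacle I anticipate is making the ``equality case'' rigorous — ruling out that $\Gamma$ is exactly long enough to do two minimal crossings and nothing else. One must argue that a closed geodesic cannot consist of precisely two perpendicular-to-$c_i$ (or to the horocycle) segments inside one or two collars; this follows because at the endpoints where the two segments meet they would have to agree as geodesics (no corner), forcing $\Gamma$ to be a single geodesic entirely inside the union of the collars, which is impossible since the only closed geodesic inside a collar $N(c_i)$ is $c_i$ itself (uniqueness of the closed geodesic in a hyperbolic annulus, already invoked in \S\ref{sec:nbhds}) and $c_i$ does not meet $N_1(c_i)$... actually $c_i \subset N_1(c_i)$, so the cleanest statement is that $\ell(\Gamma) = L > 3.5$ while a curve confined to meet only these collars and homotopic to $c_i$ has length $< 1$ by the estimate in the proof of Lemma \ref{lemma:nbhd_N3_disjoint}, a contradiction. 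The secondary technical point is handling cusps on the same footing as short geodesics, which is routine given that \S\ref{sec:nbhds} sets up $N_1, N_2, N_3$ for cusps with the identical spacing $L/4$ and Lemma \ref{lemma:injectivity_radius_cusp} provides the analogue of Lemma \ref{lemma:ends_annulus_length}.
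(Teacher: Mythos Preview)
Your strategy matches the paper's: exploit the $L/4$ buffer between $\partial N_2(c_i)$ and $\partial N_3(c_i)$ together with Lemma~\ref{lemma:nbhd_N3_disjoint}. But you have manufactured an unnecessary difficulty with the equality case. The paper obtains a strict inequality immediately: for $x_i=f(s)\in N_2(c_i)$ one has $d(x_i,\partial N_3(c_i))>L/4$ (strictly, since $N_2(c_i)$ is open), so the arc $I_i=\{t\in S^1: d_{S^1}(s,t)<d(x_i,\partial N_3(c_i))\}$ has length strictly greater than $L/2$ and $f(I_i)\subset N_3(c_i)$. Doing the same for $c_j$ and invoking disjointness of $N_3(c_i)$ and $N_3(c_j)$ yields $\ell(\Gamma)>L$, a contradiction with no equality case left to analyse. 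Your closing paragraph attempting to rule out equality is both unnecessary and, as written, muddled (e.g.\ the line about $c_i\subset N_1(c_i)$).

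For the second assertion the paper also avoids your complementary-arc manoeuvre: the same length argument shows that any component of $f^{-1}(\Gamma\cap N_3(c_i))$ meeting $f^{-1}(N_2(c_i))$ has length $>L/2$, so there is at most one such component $\gamma$; then on that single geodesic arc the distance to a lift of $c_i$ (respectively to a horocycle, for $c_i\in\caly$) has no interior local maximum, whence $\gamma\cap N_2(c_i)$ is connected. This is exactly the convexity you mention only as an ``alternative, cleaner route''. By contrast your primary phrasing --- that a complementary arc $J_k$ staying in $N_3(c_i)$ would be ``homotopic rel endpoints into $N_2(c_i)$, contradicting it being a geodesic subarc of $\Gamma$ exiting $N_2$'' --- is not an argument as it stands; what actually forces the contradiction is convexity of the strip $\{p\in\bbh^2: d(p,\tilde c_i)\leqslant r\}$, which makes any geodesic segment with endpoints on $\partial N_2(c_i)$ lie in $\overline{N_2(c_i)}$. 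Promote the convexity from a parenthetical to the main line and drop the equality discussion, and your proof coincides with the paper's.
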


\begin{proof}
If there exist distinct $c_i,c_j\in\calx\cup\caly$ such that $\Gamma\cap{N_2(c_i)}\neq\emptyset$ and $\Gamma\cap{N_2(c_j)}\neq\emptyset$. Pick $x_i\in{\Gamma\cap{N_2(c_i)}}$ and let $s=f^{-1}(x_i)$. Consider the arc
$$I_i=\set{t\in S^1\,:\,d_{S^1}(s,t)<d(x_i,\partial N_3(c_i))}$$
in the circle $S^1$ of length $L$. 
Since $d(x_i,\partial N_3(c_i))>\frac{L}4$, we have
$$\ell(I_i)
>\frac{L}{2}$$
Hence $f(I_i)\subset N_3(c_i)$ and $\ell(\Gamma\cap N_3(c_i))>\frac{L}{2}$. Similarly, $\ell(\Gamma\cap N_3(c_j))>\frac{L}2$. $\Gamma\cap N_3(c_i)$ and $\Gamma\cap N_3(c_j)$ are disjoint by Lemma \ref{lemma:nbhd_N3_disjoint}. Hence $\ell(\Gamma)>L$ and we get a contradiction. Therefore there exists at most one $c_i\in\calx\cup\caly$ such that $\Gamma\cap N_2(c_i)\neq\emptyset$.

$\Gamma$ cannot entirely lie within $N_3(c_i)$, and $f^{-1}(\Gamma\cap{N_3(c_i)})$ is a collection of arcs. The argument in previous paragraph implies that such an arc has length bigger than $\frac{L}2$ if it intersects $N_2(c_i)$. Hence there exists at most one arc, say $\gamma$, in $\Gamma\cap{N_3(c_i)}$ that can possibly intersect $N_2(c_i)$.

If $c_i$ is a cusp, then the lift of $\gamma$ in $\bbh^2$ can intersect a horocycle at most twice and hence $\gamma\cap N_2(c_i)$ is an arc. If $c_i\in\calx$, the distance from points on $\gamma$ to a lift of $c_i$ has no local maxima, and $\gamma\cap N_2(c_i)$ is an arc. It follows that $\Gamma\cap N_2(c_i)$ is an arc if nonempty.


\end{proof}

Let $\Gamma_2=\Gamma\cap\caln$ be the part of $\Gamma$ in the thin part $\caln$ and $\Gamma_1=\Gamma\setminus\Gamma_2$ be the part of $\Gamma$ in $\Sigma\setminus\caln$. Note that $\Gamma_2$ could be empty. Let $L_1=\ell(\Gamma_1)$ and $L_2=\ell(\Gamma_2)$, then $L=L_1+L_2$. By Proposition \ref{prop:intersection_with_N2}, $f^{-1}(\Gamma_1)$ and $f^{-1}(\Gamma_2)$ are both connected. The set $\cald$ of self-intersection points of $\Gamma$ consists of the self-intersection points of $\Gamma_1$ and $\Gamma_2$ since $\Gamma_1\cap\Gamma_2=\emptyset$. Let $\cald_1$ and $\cald_2$ be the sets of self-intersection points of $\Gamma_1$ and $\Gamma_2$ respectively. Then $\cald=\cald_1\cup\cald_2$ and
\begin{equation}\label{eqn:split_intersection_number}
\abs{\Gamma\cap\Gamma}=\abs{\Gamma_1\cap\Gamma_1}+\abs{\Gamma_2\cap\Gamma_2}
\end{equation}
(The definition \eqref{eqn:self_intersection_number_defn} of self-intersection numbers can be generalized for geodesic segments.)
By Propositions \ref{prop:no_intersection_with_N1} and \ref{prop:intersection_with_N2}, there are three possibilites
\begin{enumerate}
\item \emph{Simple case}: $\Gamma_2=\emptyset$ and $\Gamma_1=\Gamma$.
\item \emph{General case}: $\Gamma$ intersects $N_2(c_i)$ for some $c_i\in\calx\cup\caly$ and $\Gamma$ intersects only one boundary of $\partial{N_2(c_i)}$. In this case, as $x$ moves along $\Gamma_2$, $d(x,c_i)$ for $c_i\in\calx$ and $d(x,\partial N_1(c_i))$ for $c_i\in\caly$ decrease first and then increase. The general case is illustrated in Figure \ref{fig:general_case}.
\begin{figure}[htbp]
\centering
\tikzexternaldisable	
\begin{tikzpicture}[declare function={
	R = 42;
	f(\x) = R+.5-sqrt(R*R-\x*\x);
	ratio = 3;
}]
\def\x{2.5}
\draw [thick] (-4*\x,{f(4*\x)}) arc ({270-asin(4*\x/R)}:270:{R} and {R});
\draw [thick] (-4*\x,{-f(4*\x)}) arc ({90+asin(4*\x/R)}:90:{R} and {R});

\draw [thick, color=red] (-4*\x,0) ellipse ({f(4*\x)/ratio} and {f(4*\x)});

\foreach \a/\b in {-2.5*\x/orange, -1*\x/cyan, 0/blue} {
	\draw [color=\b, thick] (\a,{-f(\a)}) arc (-90:90:{f(\a)/ratio} and {f(\a)});
	\draw [dashed, color=\b, thick] (\a,{f(\a)}) arc (90:270:{f(\a)/ratio} and {f(\a)});
}

\draw [thick, decoration={
	brace, mirror, raise=0.1cm
	}, decorate
] (-1*\x, {-f(1*\x)}) -- (0,{-f(1*\x)}) 
	node [pos=0.5,anchor=north,yshift=-0.1cm] {$N_1(c_i)$}; 

\draw [thick, decoration={
	brace, mirror, raise=0.2cm
	}, decorate
] (-4*\x, {-f(4*\x)}) -- (0,{-f(3*\x)}) 
	node [pos=0.5,anchor=north,yshift=-0.2cm] {$N_3(c_i)$}; 

\draw [thick, decoration={
	brace, raise=0.1cm
	}, decorate
] (-2.5*\x,{f(2.5*\x)}) -- (0,{f(2*\x)}) 
	node [pos=0.5,anchor=south,yshift=0.1cm] {$N_2(c_i)$}; 

\draw[smooth, color=blue, thick] (-5*\x,0.4*\x)
  to [start angle=-20, next angle=0] ($(-4*\x,0)+(160:{f(4*\x)/ratio} and {f(4*\x)})$)
  to [next angle=40] ($(-4*\x,0)+(40:{f(4*\x)/ratio} and {f(4*\x)})$) coordinate(x);
\draw[smooth, color=blue, thick, dashed] (x) 
	to [start angle=40, next angle=-5] (-9,{f(9)}) coordinate (x);
\draw[smooth, color=blue, thick] (x) 
	to [start angle=-5, next angle=280] (-8.3,0) 
	to [next angle=5] (-7.8,{-f(7.8)}) coordinate (x);
\draw[smooth, color=blue, thick, dashed] (x) 
	to [start angle=5, next angle=80] (-7.35,0) 
	to [next angle=-5] (-6.7,{f(6.7)}) coordinate (x);
\draw[smooth, color=blue, thick] (x) 
	to [start angle=-5, next angle=280] (-6.1,0) 
	to [next angle=5] (-5.7,{-f(5.7)}) coordinate (x);
\draw[smooth, color=blue, thick, dashed] (x) 
	to [start angle=5, next angle=80] (-5.25,0) 
	to [next angle=-5] (-4.8,{f(4.8)}) coordinate (x);
\draw[smooth, color=blue, thick] (x) 
	to [start angle=-5, next angle=280] (-4.3,0) 
	to [next angle=5] (-4,{-f(4)}) coordinate (x);
\draw[smooth, color=blue, thick, dashed] (x) 
	to [start angle=5, next angle=80] (-3.65,0) 
	to [next angle=-5] (-3.3,{f(3.3)}) coordinate (x);
\draw[smooth, color=blue, thick] (x) 
	to [start angle=-5, next angle=270] (-2.8,0) 
	to [next angle=185] (-3.3,{-f(3.3)}) coordinate (x);
\draw[smooth, color=blue, thick, dashed] (x) 
	to [start angle=185, next angle=100] (-3.65,0) 
	to [next angle=175] (-4,{f(4)}) coordinate (x);
\draw[smooth, color=blue, thick] (x) 
	to [start angle=175, next angle=260] (-4.3,0) 
	to [next angle=185] (-4.8,{-f(4.8)}) coordinate (x);
\draw[smooth, color=blue, thick, dashed] (x) 
	to [start angle=185, next angle=100] (-5.25,0) 
	to [next angle=175] (-5.7,{f(5.7)}) coordinate (x);
\draw[smooth, color=blue, thick] (x) 
	to [start angle=175, next angle=260] (-6.1,0) 
	to [next angle=185] (-6.7,{-f(6.7)}) coordinate (x);
\draw[smooth, color=blue, thick, dashed] (x) 
	to [start angle=185, next angle=100] (-7.35,0) 
	to [next angle=175] (-7.8,{f(7.8)}) coordinate (x);
\draw[smooth, color=blue, thick] (x) 
	to [start angle=175, next angle=260] (-8.3,0) 
	to [next angle=185] (-9,{-f(9)}) coordinate (x);
\draw[smooth, color=blue, thick, dashed] (x) 
	to [start angle=185, next angle=140] ($(-4*\x,0)+(-40:{f(4*\x)/ratio} and {f(4*\x)})$) coordinate(x);
\draw[smooth, color=blue, thick] (x) 
	to [start angle=140, next angle=180] ($(-4*\x,0)+(200:{f(4*\x)/ratio} and {f(4*\x)})$)
	to [next angle=200] (-5*\x,-0.4*\x);

  			
\path ({f(0)/ratio},0) node[circle, inner sep=1pt, label={[label distance=.4em, anchor=center]0:$c_i$}]{};
  			
\end{tikzpicture}
\caption{\label{fig:general_case}
The general case}
\end{figure}
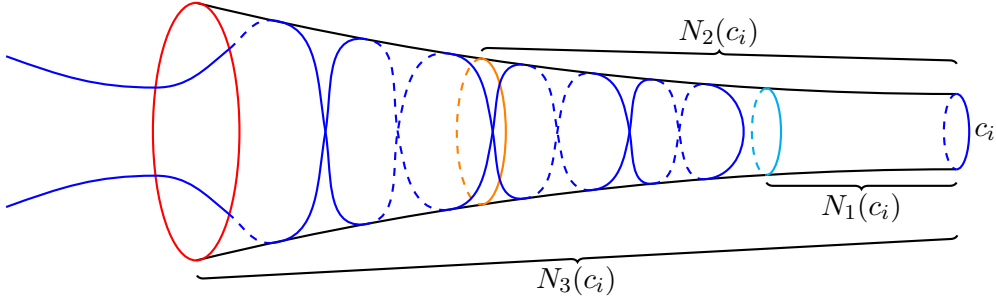

\item \emph{Special case}: $\Gamma$ intersects some $N_2(c_i)$ and $\Gamma$ intersects only both boundaries of $\partial{N_2(c_i)}$. In this case $N_1(c_i)=\emptyset$ by Proposition \ref{prop:no_intersection_with_N1}, and $\Gamma\cap{N_2(c_i)}$ is an embedded arc in $\Sigma$. Note that the special case does not happen for cusps.
\end{enumerate}

The self-intersection numbers of $\Gamma_1$ and $\Gamma_2$ are estimated in the following two theorems.

\begin{figure}[htbp]
\centering
\tikzexternaldisable
\begin{tikzpicture}[declare function={
	R = 9;
	f(\x) = R+.5-sqrt(R*R-\x*\x);
	ratio = 3;
}]
\def\x{1}
\draw [thick] (-4,{f(4)}) arc ({270-asin(4/R)}:{270+asin(4/R)}:{R} and {R});
\draw [thick] (-4,{-f(4)}) arc ({90+asin(4/R)}:{90-asin(4/R)}:{R} and {R});

\draw [thick, color=red] (-4,0) ellipse ({f(4)/ratio} and {f(4)});

\foreach \a/\b in { -2/orange,  0/blue,  2/orange, 4/red} {
	\draw [color=\b, thick] (\a,{-f(\a)}) arc (-90:90:{f(\a)/ratio} and {f(\a)});
	\draw [dashed, color=\b, thick] (\a,{f(\a)}) arc (90:270:{f(\a)/ratio} and {f(\a)});
}


\draw[smooth, color=blue, thick] (-6,-0.4)
  to [start angle=30, next angle=5] ($(-4*\x,0)+(175:{f(4*\x)/ratio} and {f(4*\x)})$)
	to [next angle=5] ($(-4*\x,0)+(10:{f(4*\x)/ratio} and {f(4*\x)})$) coordinate(x);
\draw[smooth, color=blue, thick, dashed] (x) 
	to [start angle=5, next angle=0] (-0.5,{f(-0.5)}) coordinate (x);
\draw[smooth, color=blue, thick] (x)
  to [start angle=0, next angle=-15] ($(4*\x,0)+(-20:{f(4*\x)/ratio} and {f(4*\x)})$)
  to [next angle=-32] (6, -1.3);

\draw [thick, decoration={
	brace, mirror, raise=0.1cm
	}, decorate
] (-2, {-f(2)}) -- (2,{-f(2)}) 
	node [pos=0.5,anchor=north,yshift=-0.1cm] {$N_2(c_i)$}; 

\draw [thick, decoration={
	brace, raise=0.1cm
	}, decorate
] (-4,{f(4)}) -- (4,{f(4)}) 
	node [pos=0.5,anchor=south,yshift=0.1cm] {$N_3(c_i)$}; 
  			
\path ({f(0)/ratio},0) node[circle, inner sep=1pt, label={[label distance=.4em, anchor=center]0:$c_i$}]{};
  			
\end{tikzpicture}

\caption{\label{fig:special_case}
The special case}
\end{figure}
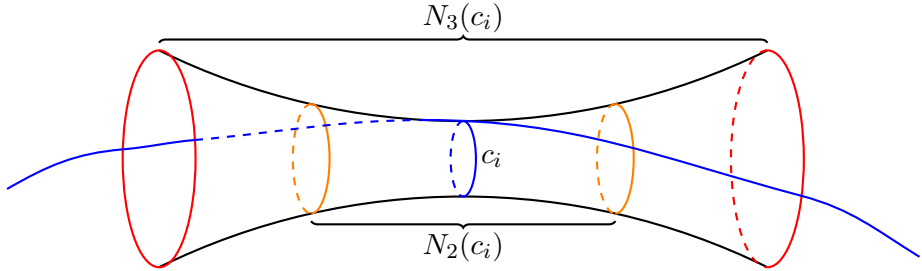

\begin{thm}[Thick part]\label{thm:estimate_thick_part}
\begin{equation}\label{eqn:estimate_thick_part}
\abs{\Gamma_1\cap\Gamma_1}\leqslant\frac{101}{2\cdot25}L_1e^{\frac{L}4}+\frac{101^2}{2\cdot25^2}L_1^2e^{\frac{L}2}
\end{equation}
\end{thm}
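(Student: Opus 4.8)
The plan is to use the uniform lower bound on the injectivity radius along $\Gamma_1$ from Lemma~\ref{lemma:inj_radius_outside_nbhds} to replace the count of self-intersections of $\Gamma_1$ by a count of intersecting pairs of short sub-arcs. Set $r=\tfrac{25}{101}e^{-L/4}$; then by Lemma~\ref{lemma:inj_radius_outside_nbhds} the injectivity radius of $\Sigma$ at every point of $\Gamma_1\subset\Sigma\setminus\caln$ exceeds $r$. Since $f^{-1}(\Gamma_1)$ is connected (Proposition~\ref{prop:intersection_with_N2}), it is a single arc (or all of $S^1$) of length $L_1$, and I would subdivide it into $n=\ceil{L_1/r}$ sub-arcs $J_1,\dots,J_n$ of length $L_1/n\leqslant r$, writing $a_i=f(\overline{J_i})$ for the corresponding geodesic sub-arc of $\Gamma_1$.

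The first step is to check that each $a_i$ is simple, i.e. that $f|_{\overline{J_i}}$ is injective: if $f(s)=f(t)$ with $s\neq t$ in $\overline{J_i}$, then the sub-arc of $\Gamma$ from $s$ to $t$ is a geodesic arc of length $<2r$ whose endpoints map to a common point $x$, so it lifts to $\bbh^2$ to a geodesic arc from a lift $\widetilde x$ to $g\widetilde x$ with $g$ a nontrivial deck transformation; its length is then $\geqslant 2\,\mathrm{inj}_x(\Sigma)>2r$, a contradiction. The second step is that $\#(a_i\cap a_j)\leqslant 1$ for $i\neq j$: if $x\in a_i\cap a_j$, then since $a_i$ and $a_j$ each pass through $x$ and have length $\leqslant r$, they both lie in the closed $r$-ball about $x$, which lifts isometrically to $\bbh^2$ because the injectivity radius at $x$ exceeds $r$; two distinct geodesic arcs in $\bbh^2$ cross at most once. (A coincidence of two of the lifted arcs along a sub-segment would force $\Gamma$ to retrace a geodesic segment, which is impossible for a smooth primitive closed geodesic, so one reduces to the primitive case at the start.)

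The last step is the bookkeeping. By the definition \eqref{eqn:self_intersection_number_defn} applied to the segment $\Gamma_1$, $\abs{\Gamma_1\cap\Gamma_1}$ is exactly the number of unordered pairs $\{s,t\}$ with $s\neq t$, $f(s)=f(t)$, and $s,t\in f^{-1}(\Gamma_1)$. For each such pair, $s$ and $t$ lie in two distinct sub-arcs $J_i$, $J_j$ by simplicity of the $a_i$, and for a fixed pair $i<j$ the injectivity of $f|_{\overline{J_i}}$ and $f|_{\overline{J_j}}$ makes the number of such pairs at most $\#(a_i\cap a_j)\leqslant 1$. Hence $\abs{\Gamma_1\cap\Gamma_1}\leqslant\binom n2$, and since $n-1<L_1/r$ and $1/r=\tfrac{101}{25}e^{L/4}$,
$$\binom n2<\frac12\bracket{\frac{L_1^2}{r^2}+\frac{L_1}{r}}=\frac{101}{2\cdot 25}\,L_1e^{L/4}+\frac{101^2}{2\cdot 25^2}\,L_1^2e^{L/2},$$
which is \eqref{eqn:estimate_thick_part}.

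The geometric content — that a geodesic arc shorter than the local injectivity diameter is embedded, and that two such arcs through a common point meet only once — falls out immediately from Lemma~\ref{lemma:inj_radius_outside_nbhds} once the relevant $r$-balls are lifted to $\bbh^2$, so I expect the only real care to be needed in the bookkeeping: unwinding the multiplicity convention in \eqref{eqn:self_intersection_number_defn} so that self-intersection points of $\Gamma_1$ correspond to pairs of preimages, treating the sub-arcs that share an endpoint consistently, and excluding an overlap segment between two sub-arcs by passing to a primitive geodesic.
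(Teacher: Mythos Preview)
Your proof is correct and follows essentially the same route as the paper's: subdivide $\Gamma_1$ into roughly $L_1/r$ sub-arcs of length at most $r=\tfrac{25}{101}e^{-L/4}$, use the injectivity radius bound from Lemma~\ref{lemma:inj_radius_outside_nbhds} to show each sub-arc is embedded and any two distinct sub-arcs meet at most once, and conclude $\abs{\Gamma_1\cap\Gamma_1}\leqslant\binom{n}{2}$. The only differences are cosmetic (you take $n=\ceil{L_1/r}$ where the paper takes $M=\floor{L_1/r}+1$, and you lift the whole $r$-ball about an intersection point to $\bbh^2$ whereas the paper compares the two geodesic sub-segments directly); your explicit remarks on endpoint bookkeeping and the reduction to primitive geodesics are points the paper leaves implicit.
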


\begin{proof}
Let $\calp_1$ be the preimages of $\cald_1$ with multiplicity, that is, if $x\in\cald_1$ has $n$ preimages, any $s\in f^{-1}(x)$ has multiplicity $(n-1)$ in $\calp_1$. Then
$$\#\calp_1=2\abs{\Gamma_1\cap\Gamma_1}.$$
Divide $\Gamma_1$ into $M:=\gauss{\frac{101}{25} L_1e^{\frac{L}{4}}}+1$ short closed segments with length less than $\frac{L_1}{\frac{101}{25} L_1e^{\frac{L}{4}}}=\frac{25}{101}e^{-\frac{L}{4}}$.
Let $S$ be the set of the segments. Since $\calp_1$ is finite, $S$ can be chosen such that the endpoints of the segments do not contain points in $\calp_1$. 
$f(I_\alpha)\subset\Gamma_1$ is in the thick part and $\ell(I_\alpha)<\frac{25}{101}e^{-\frac{L}{4}}$, hence $f(I_\alpha)$ has no self-intersections by Lemma \ref{lemma:inj_radius_outside_nbhds}. 

We claim that any two distinct $I_\alpha, I_\beta\in S$ have at most one intersection. If there exist $s_1,s_2\in I_\alpha$ ($s_1\neq s_2$) and $t_1,t_2\in I_\beta$ such that $f(s_1)=f(t_1)$ and $f(s_2)=f(t_2)$. Let $\gamma_\alpha$ be the segment in $I_\alpha$ between $t_1$ and $t_2$, and $\gamma_\beta$ be the segment in $I_\beta$ between $s_1$ and $s_2$. Then $f(\gamma_\alpha)$ and $f(\gamma_\beta)$ are two distinct geodesics between $f(s_1)$ and $f(s_2)$. Since the injectivity radius at $f(s_1)=f(t_1)$ is at least $\frac{25}{101}e^{-L/4}$ and $\ell(f(\gamma_\alpha)),\ell(f(\gamma_\beta))<\frac{25}{101}e^{-\frac{L}{4}}$, the existence of $f(\gamma_\alpha)$ and $f(\gamma_\beta)$ contradicts the uniqueness of geodesics in $\bbh^2$.

Since any two distinct $I_\alpha, I_\beta\in S$ contribute at most 1 to $\abs{\Gamma_1\cap\Gamma_1}$, we have
$$\abs{\Gamma_1\cap\Gamma_1}\leqslant\frac12M(M-1)\leqslant \frac{1}{2}\bracket{1+\frac{101}{25} L_1e^{\frac{L}{4}}}\cdot \frac{101}{25} L_1e^{\frac{L}{4}}=\frac{101}{2\cdot25}L_1e^{\frac{L}4}+\frac{101^2}{2\cdot25^2}L_1^2e^{\frac{L}2}$$

\end{proof}

\begin{thm}[Thin part]\label{thm:estimate_thin_part}
\begin{equation}\label{eqn:estimate_thin_part}
\abs{\Gamma_2\cap\Gamma_2}\leqslant\frac{25}{12}L_2e^{\frac{L}2}.
\end{equation}
\end{thm}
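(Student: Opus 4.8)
The plan is to localise $\Gamma_2$ and count its self-intersections in a cyclic cover. By Proposition \ref{prop:intersection_with_N2}, if $\Gamma_2\neq\emptyset$ then $\Gamma_2=\Gamma\cap N_2(c_i)$ for a single $c_i\in\calx\cup\caly$ and is a single geodesic arc; in the special case this arc is embedded in $\Sigma$, so $\abs{\Gamma_2\cap\Gamma_2}=0$, so assume the general case (Figure \ref{fig:general_case}), where $\Gamma_2$ enters and leaves through the same component of $\partial N_2(c_i)$ (automatic for cusps). Pass to the $\mathbb Z$-cover of a neighbourhood of $c_i$: in the upper half plane put the axis of $c_i$ on the imaginary axis with deck generator $T(z)=e^{\ell(c_i)}z$ when $c_i\in\calx$, resp. take the universal cover of the cusp neighbourhood with $T(z)=z+4$ when $c_i\in\caly$ (Figure \ref{fig:nbhd_cusps}). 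Then $\Gamma_2$ has a single lift $\tilde\gamma$, a geodesic arc, with full preimage $\bigcup_k T^k\tilde\gamma$. Since distinct complete geodesics of $\bbh^2$ meet in at most one point, a self-intersection point of $\Gamma_2$ with $n$ preimages corresponds to $n$ distinct translates through one point, and matching the weight $\binom n2$ in \eqref{eqn:self_intersection_number_defn} against pairs of such translates modulo the deck action yields
\[
\abs{\Gamma_2\cap\Gamma_2}=\#\set{k\geqslant1:\tilde\gamma\cap T^k\tilde\gamma\neq\emptyset}.
\]
(If $\tilde\gamma$ meets the axis of $T$, i.e. $\Gamma_2$ passes through $c_i$, the translates lie on nested disjoint geodesics and $\abs{\Gamma_2\cap\Gamma_2}=0$; so assume not.)

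Next, locate where $\tilde\gamma\cap T^k\tilde\gamma$ can occur. Let $S\supset\tilde\gamma$ be the carrying geodesic; then $T^k\tilde\gamma\subset T^kS$ with $S\neq T^kS$ for $k\neq0$, so $S\cap T^kS$ is at most one point. Let $s$ be the axial coordinate — hyperbolic arclength along the axis of $T$ for $c_i\in\calx$, the real part for $c_i\in\caly$ — so $T$ shifts $s$ by the period $p:=\ell(c_i)$ (resp. $p:=4$), and let $P_0\in S$ be the point of $S$ closest to the axis (resp. the apex of the semicircle $S$). By the reflective symmetry of the configuration — the reflection preserving $S$ and swapping the two ends of $\tilde\gamma$ also preserves each $N_j(c_i)$ — the value $s(P_0)$ is the midpoint of the $s$-range of $\tilde\gamma$. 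A short computation with $S$ and $T^kS$ shows that $S\cap T^kS$, when nonempty, has $s$-coordinate $s(P_0)+\tfrac{k\,p}{2}$; for this to lie on $\tilde\gamma$ one needs $kp\leqslant A$, where $A$ is the total $s$-span of $\tilde\gamma$. Hence $\abs{\Gamma_2\cap\Gamma_2}\leqslant A/p$.

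It remains to bound $A$ by $L_2:=\ell(\Gamma_2)$. By Proposition \ref{prop:no_intersection_with_N1}, $\Gamma$ avoids $N_1(c_i)$. If $c_i\in\calx$ and $N_1(c_i)\neq\emptyset$, i.e. $D_1:=\log\tfrac1{\ell(c_i)}-\tfrac L2\geqslant0$, then $\tilde\gamma$ stays at distance $\geqslant D_1$ from the axis; in Fermi coordinates $(r,s)$ the metric is $dr^2+\cosh^2r\,ds^2$, so $\abs{ds}\leqslant(\cosh D_1)^{-1}\,ds_{\mathrm{hyp}}$ along $\tilde\gamma$ and $A\leqslant L_2/\cosh D_1$; since $\cosh D_1>\tfrac12e^{D_1}=\tfrac1{2\ell(c_i)}e^{-L/2}$ we get $\abs{\Gamma_2\cap\Gamma_2}\leqslant A/\ell(c_i)<2L_2e^{L/2}$. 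If $N_1(c_i)=\emptyset$ then $\ell(c_i)>e^{-L/2}$ and $A\leqslant L_2$ trivially, so $\abs{\Gamma_2\cap\Gamma_2}\leqslant L_2/\ell(c_i)<L_2e^{L/2}$. If $c_i\in\caly$, then $\tilde\gamma$ lies below the Euclidean height $4e^{L/2}$ of $\partial N_1(c_i)$, so $\abs{dx}\leqslant y\,ds_{\mathrm{hyp}}\leqslant 4e^{L/2}\,ds_{\mathrm{hyp}}$ gives $A\leqslant 4e^{L/2}L_2$ and $\abs{\Gamma_2\cap\Gamma_2}\leqslant A/4\leqslant L_2e^{L/2}$. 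In all cases $\abs{\Gamma_2\cap\Gamma_2}<2L_2e^{\frac L2}\leqslant\tfrac{25}{12}L_2e^{\frac L2}$; the stated constant $\tfrac{25}{12}$ also comes out directly if one estimates the length of a single once-around subloop of $\Gamma_2$ inside $N_2(c_i)\setminus N_1(c_i)$ using Lemma \ref{lemma:ends_annulus_length} in place of the Fermi-coordinate bound.

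I expect the main difficulty to be the first displayed identity — carefully reconciling the weighted sum $\sum_x\binom{\#f^{-1}(x)}{2}$ with the plain count of $k\geqslant1$ having $\tilde\gamma\cap T^k\tilde\gamma\neq\emptyset$ — together with the symmetry argument pinning down the $s$-coordinate of $\tilde\gamma\cap T^k\tilde\gamma$; the length estimates afterwards are routine.
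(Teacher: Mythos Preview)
Your argument is correct and yields a slightly sharper constant than the paper's. The two proofs share the same setup—reducing to the general case via Proposition~\ref{prop:intersection_with_N2}, lifting the single arc $\Gamma_2$ to the cyclic cover of a neighbourhood of $c_i$, and exploiting the reflective symmetry about the closest point to the core—but diverge in the actual counting.

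The paper parametrises the lift $\widetilde{\Gamma}$ by \emph{arclength} and observes that the self-intersections occur at symmetric pairs $(-s_i,s_i)$; for each consecutive pair, the image $p\!\left(\phi([-s_{i+1},-s_i]\cup[s_i,s_{i+1}])\right)$ is a homotopically nontrivial loop in $N_2(c_i)$ to which Lemma~\ref{lemma:ends_annulus_length} (or Lemma~\ref{lemma:injectivity_radius_cusp} in the cusp case) is applied, giving $2(s_{i+1}-s_i)>\tfrac{12}{25}e^{-L/2}$ and hence $K<\tfrac{25}{12}L_2e^{L/2}$. You instead parametrise by the \emph{axial} (Fermi, resp.\ horocyclic) coordinate, compute that the unique point of $S\cap T^kS$ sits at axial coordinate $s(P_0)+\tfrac{kp}{2}$, and bound the axial span $A$ of $\tilde\gamma$ directly from the metric $dr^2+\cosh^2r\,ds^2$ (resp.\ $y^{-2}(dx^2+dy^2)$) using $\Gamma\cap N_1(c_i)=\emptyset$ from Proposition~\ref{prop:no_intersection_with_N1}. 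This bypasses Lemma~\ref{lemma:ends_annulus_length} entirely and gives $\abs{\Gamma_2\cap\Gamma_2}\leqslant A/p<2L_2e^{L/2}$, which of course implies the stated bound since $2<\tfrac{25}{12}$. Your last sentence correctly notes that the paper's constant $\tfrac{25}{12}$ is exactly what emerges if one replaces the Fermi estimate by the loop-length estimate of Lemma~\ref{lemma:ends_annulus_length}. A minor remark: your parenthetical case ``$\tilde\gamma$ meets the axis'' is in fact vacuous, since an arc crossing $c_i$ would exit through the opposite component of $\partial N_2(c_i)$, which is the special case already excluded.
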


\begin{proof}
In the simple and special cases, either $\Gamma_2$ is empty or $\Gamma_2$ has no self-intersections points and the theorem is trivial. In the general case, the geodesic $\Gamma_2$ meets only one side of $N_2(c_i)$ for some $c_i\in\calx\cup\caly$. $\Gamma_2$ has only double points and hence $\abs{\Gamma_2\cap\Gamma_2}=\# \cald_2$.

Suppose that $c_i\in\calx$. The universal covering $\bbh^2\to\Sigma$ restricts to a universal covering $p:\Omega\to N_2(c_i)$ of $N_2(c_i)$. We may assume that $p^{-1}(c_i)$ is the horizontal line $\bbh^1\subset\bbh^2$ (in the Poincar\'e disk model). Let $\widetilde{\Gamma}$ be a lift of $\Gamma_2$. Assume $x_0\in \widetilde{\Gamma}$ is a point that has smallest distance to $\bbh^1$. Let $\phi:[-A,B]\to\widetilde{\Gamma}$ be an isometry so that $\phi(0)=x_0$. The function $d(\phi(s),\bbh^1)$ 
is first decreasing and then increasing when $s$ increases from $-A$ to $B$ and $d(\phi(s),\bbh^1)=d(\phi(-s),\bbh^1)$. The self intersection points of $\Gamma_2$ are the image of pairs $(-s,s)\in[-A,B]$ such that $p(\phi(-s))=p(\phi(s))$. 
Let $(-s_1,s_1),\cdots,(-s_K,s_K)$ be the pairs with $s_0=0<s_1<s_2<\cdots<s_K$, where $K=\abs{\Gamma_2\cap\Gamma_2}$ is the self-intersection number of $\Gamma_2$. For each $0\leqslant i\leqslant K-1$, $p(\phi([-s_{i+1},-s_i]\cup[s_i,s_{i+1}]))$ is a homotopy nontrivial circle in $N_2(c_i)$. By Lemma \ref{lemma:ends_annulus_length}, we have
$$2(s_{i+1}-s_i)>\frac{12}{25}\ell(c_i)\exp\bracket{\log{\frac{1}{\ell(c_i)}}-\frac{L}{2}}=\frac{12}{25}e^{-\frac{L}{2}}.$$

When $c_i\in\caly$ is a cusp, the proof is similar where the geodesic line $\bbh^1$ is replaced by a horocycle. Suppose that a lift of $c_i$ is $\infty$ in the upper plane model and $\widetilde{\Gamma}$ be a lift of $\Gamma$. Let $x_0\in\widetilde{\Gamma}$ be the point with largest vertical coordinate. The pairs $(-s_1,s_1),\cdots,(-s_K,s_K)$ with $s_0=0<s_1<s_2<\cdots<s_K$ are similarly defined. Each $p(\phi(s_i))\in N_2(c_i)\setminus N_1(c_i)$ and hence 
$$2\log2+\frac{L}4\leqslant d(p(\phi(s_i)), \partial N(c_i))<2\log2+\frac{L}2.$$ 
By Lemma \ref{lemma:injectivity_radius_cusp}, we have
$$2(s_{i+1}-s_i)> 2 \sinh^{-1}\bracket{2\exp\bracket{-2\log2-\frac{L}2}}>\frac{24}{25}e^{-\frac{L}{2}}.$$

Since $\ell(\Gamma_2)=L_2$, we have $\abs{\Gamma_2\cap\Gamma_2}<\frac{25}{12}L_2e^{{L}/{2}}$.


\end{proof}

\begin{proof}[Proof of Theorem \ref{thm:bound_intersection_by_length}]
$\abs{\Gamma\cap\Gamma}=0$ when $L<4\log(1+\sqrt{2})$ by \cite[Lemma 7]{Y2799} and Theorem \ref{thm:bound_intersection_by_length} is trivial in this case. For $L\geqslant4\log(1+\sqrt{2})\sim3.5$, from \eqref{eqn:split_intersection_number}, \eqref{eqn:estimate_thick_part} and \eqref{eqn:estimate_thin_part}, we have $4<\sigma=\frac{101}{25}<\frac{25}{6}$ and 
$$\begin{aligned}
\abs{\Gamma\cap\Gamma}=&\abs{\Gamma_1\cap\Gamma_1}+\abs{\Gamma_2\cap\Gamma_2}\leqslant\frac{101}{2\cdot25}L_1e^{\frac{L}4}+\frac{101^2}{2\cdot25^2}L_1^2e^{\frac{L}2}+\frac{25}{12}L_2e^{\frac{L}2}\\
=&\frac{101}{2\cdot25}L_1e^{\frac{L}4}+\frac{101^2}{2\cdot25^2}L_1^2e^{\frac{L}2}+\frac{25}{12}(L-L_1)e^{\frac{L}2}
\leqslant\frac{101^2}{2\cdot25^2}L_1^2e^{\frac{L}2}+\frac{25}{12}Le^{\frac{L}2}\\
\leqslant&\bracket{\frac{101^2}{2\cdot 25^2}L^2+\frac{25}{12}L}e^{\frac{L}2}<9L^2e^{\frac{L}2}
\end{aligned}$$ 
(The last inequality holds when $\displaystyle{L>2.5}$.) Hence Theorem \ref{thm:bound_intersection_by_length} holds for all $L$.
\end{proof}

\begin{proof}[Proof of Corollary \ref{thm:lower_bound}]
For any $\varepsilon>0$, let $\displaystyle{\varepsilon_1=\min\set{\varepsilon,\frac15}}$ and $\displaystyle{k_0:=\exp\bracket{\frac{6}{\varepsilon^2_1}}}$. Let $k>k_0$, then $\displaystyle{\log k>\frac{6}{\varepsilon_1^2}}$.
Let $f(x)=\varepsilon_1 x-4\log 6-4\log x$, then
\begin{align*}
f\bracket{\frac{6}{\varepsilon_1^2}}=&\varepsilon_1\cdot\frac{6}{\varepsilon_1^2}-4\log6-4\log\frac{6}{\varepsilon_1^2}\\
\geqslant&\frac{6}{\varepsilon_1}-8\log6-8\cdot\frac{\log5}{4}\bracket{\frac{1}{\varepsilon_1}-1} & \bracket{\text{since }\log x\leqslant\frac{\log5}{4}(x-1)\text{ for } x\geqslant5}\\
=&\frac{6-\log 25}{\varepsilon_1}-8\log 6+2\log 5\\
\geqslant&5\cdot(6-\log25)-8\log 6+2\log5 >0.
\end{align*}
Since $f^\prime(x)=\varepsilon_1-\frac4x>0$ for $x>\frac{6}{\varepsilon_1^2}$, we have
\begin{equation}\label{eqn:lower_bound_1}
\frac\varepsilon2\log k\geqslant\frac{\varepsilon_1}2\log k >2\log 6 +2\log \log k
\end{equation}
Suppose that there exists a $k$-geodesic with length $L<(2-\varepsilon)\log k$. By Theorem \ref{thm:bound_intersection_by_length}, we have $k<9L^2e^{L/2}$. Hence
\begin{align*}
\log k<&2\log 3+2\log L+\frac{L}2<2\log3+2\log(2-\varepsilon)+2\log\log k+\frac12(2-\varepsilon)\log k\\<&2\log 6 + 2\log\log k+\log k-\frac{\varepsilon}2\log k
\end{align*}
This contradicts \eqref{eqn:lower_bound_1}. Hence any $k$-geodesic $\omega$ with $k>k_0$ has length $\ell(\omega)>(2-\varepsilon)\log k$.
\end{proof}


\end{document}